\newtheorem{theorem}[equation]{Theorem}
\newtheorem{corollary}[equation]{Corollary}
\newtheorem{proposition}[equation]{Proposition}
\theoremstyle{definition}
\newtheorem{definition}[equation]{Definition}
\newtheorem{example}[equation]{Example} 
\theoremstyle{remark}
\newtheorem{remark}[equation]{Remark} 
\numberwithin{equation}{section}
\newcommand{\cE}{{\mathcal E}}
\newcommand{\cF}{{\mathcal F}}
\newcommand{\cK}{{\mathcal K}}
\newcommand{\cL}{{\mathcal L}}
\newcommand{\cO}{{\mathcal O}}
\newcommand{\cP}{{\mathcal P}}
\newcommand{\cN}{{\mathcal N}}
\newcommand{\id}{{\mathrm {id}}}
\newcommand{\pr}{{\mathrm {pr}}}
\newcommand{\Osc}{{\mathrm {Osc}}}
\newcommand{\Proj}{{\mathrm {Proj}}}
\newcommand{\Grass}{{\mathrm {Grass}}}
\newcommand{\IP}{{\mathbb P}}
\newcommand{\C}{\mathbb C}
\DeclareMathOperator{\Ker}{Ker}
\DeclareMathOperator{\Image}{Im}
\DeclareMathOperator{\rank}{rank}
\def\P#1{{\cP}^{#1}_X({\cL})}
\begin{document}

\title[On fundamental forms and
osculating bundles]{On fundamental forms and osculating bundles}
\author{Raquel Mallavibarrena} 
\address{Departamento de Algebra, Facultad de Ciencias Matemáticas,
Plaza de las Ciencias, 3 - Universidad Complutense de Madrid,
28040 Madrid, Spain}
\email{rmallavi@mat.ucm.es}
\author{Ragni Piene} 
\address{Department of Mathematics, University
of Oslo, P.O. Box 1053 Blindern, NO-0316 Oslo, Norway}
\email{ragnip@math.uio.no}


\maketitle

\hfill{\emph{To the memory of Gianni Sacchiero}}

\begin{abstract}
We define higher order fundamental forms and osculating spaces of projective algebraic varieties, using sheaves of principal parts. We show that the $m$th fundamental form can be viewed as the differential of the $(m-1)$th Gauss map, and explain why the vanishing of the $m$th fundamental form implies that the variety is contained in a general $(m-1)$th osculating space. Pointwise, the fundamental forms give linear systems on the projectivized tangent spaces. We show that, at each point, the Jacobian of the $m$th fundamental form is contained in the $(m-1)$th fundamental form. In the case of ruled varieties, we describe these linear systems. We discuss conditions for a surface to be ruled, in terms of the second fundamental form and the Fubini cubic.
\end{abstract}

\section{Introduction}
In classical differential geometry the \emph{second fundamental form} of a surface in $\mathbb{R}^3$ at a smooth point is a quadratic form on the tangent space to the surface at that point. The starting point of this paper is the work by Griffiths and Harris published in 1979 \cite{GH}. Using Darboux frames, they defined this quadratic form pointwise for a complex analytic projective variety and showed that it could be viewed as the fiber of a map from the second symmetric product of the tangent bundle to the normal bundle \cite{GH}*{(1.18), p.~366}. They also defined higher fundamental forms pointwise, using Darboux frames, and gave a similar description of the corresponding maps of bundles. Our reading of their paper led us, more than thirty years ago, to 
define, in a purely algebraic
way and without using frames, the higher fundamental forms of a quasi-projective variety. This definition was not published at that time, but appeared in lectures and in the Master thesis of Tegnander \cite{Te}.
A similar definition was recently given by Ein and Niu in their paper \cite{EN}, which  made us revisit our old notes and expand them into the present paper, where we
 give various interpretations and properties of these fundamental forms and the linear systems
associated with them. Several of our results can also be found, or have analogs, in papers by other authors, such as \cite{GH}, \cite{S}, \cite{L}, \cite{DeDiI}.

The paper is organized as follows. In the next section we define fundamental forms of an algebraic variety $X$ in projective space, using sheaves of principal parts
that define the osculating spaces of the variety.
We relate our definition to the definition by Altman and Kleiman \cite{AK1} of the second fundamental form of a subsheaf and prove Theorem \ref{fact}, which generalizes an observation by Perkinson \cite{P}. 
We define higher order Gauss maps, and show that Proposition \ref{fund} and Theorem \ref{fact} imply that
 the $m$th fundamental form is equal to the differential of the $(m-1)$th Gauss map. We also explain why the vanishing of the $m$th fundamental form implies that $X$ is contained in its $(m-1)$th osculating space at a (general) point.

In the third section we consider the interpretation of fundamental forms as linear systems on the projectivized tangent spaces. We prove in Theorem \ref{jac} that the Jacobian of the $m$th fundamental form is contained in the $(m-1)$th fundamental form. We illustrate our results by three examples; these are non-ruled surfaces in $\IP^5$ such that the second order osculating spaces have dimension $4$ (instead of the expected dimension $5$). The second fundamental forms are pencils of quadrics in $\IP^1$; in one case, these pencils have a base point, in the two other cases, they do not. 

In the fourth section we study and describe the fundamental forms of projective ruled varieties $\pi \colon X=\IP(\mathcal E)\to Y$. 
We use a result of Landsberg \cite{LandsbergLinear} to give a condition for a \emph{surface} to be ruled, in terms of the second fundamental form and the Fubini cubic discussed in \cite{GH}.
We can view ruled varieties as varieties in a Grassmann variety, and we show that the bundles of principal parts of $\mathcal E$ on $Y$  are equal to the pushdowns of the bundles of principal parts of $\mathcal O_X(1)$ on $X$.

This work grew out of old notes by the authors. A part of these notes were based on  writings by the second author in 1988--89, while she was a Science 
Scholar at the Bunting Institute  of Radcliffe College,\footnote{Now the Radcliffe Institute for Advanced Study at Harvard University.} and on her  lecture  ``Espaces osculateurs, formes fondamentales et multiplicit\'es des discriminants'' given at \'Ecole Normale Sup\'erieure in Paris on November 26, 1992.
In  the 1992 Master thesis of Cathrine Tegnander this definition of fundamental forms is used \cite{Te}*{4.1}, and some of our surface examples are taken from her thesis.

\section{Fundamental forms}

Let $k$ be an algebraically closed field and $V$ a $k$-vector 
space of dimension $N+1$.  Suppose $X$ is a smooth (but not necessarily proper), irreducible
$k$-scheme, of dimension $r$, and that $f \colon X\to \IP(V)$ is a morphism which
is birational onto its image.

Set ${\cL} := f^{*}{\cO}_{\IP(V)}(1)$, and let ${\cP}^m_X({\cL})$ denote 
the sheaf of principal
parts of order $m$ of $\cL$, for $m\geq 0$ (see \cite{EGA}*{16.7}, \cite{numchar}*{§ 6, pp.~492--494}). Since $X$ is smooth, of 
dimension $r$,
${\cP}^m_X({\cL})$ is locally free, with rank $\binom {r+m}m$, and 
there are exact sequences,
for $m\geq 1$, 
\[0\to S^m\Omega ^1_X\otimes {\cL}\to \P m \to
\P {m-1}\to 0.\]
Moreover, for each $m$, there is a natural map 
\[H^0(X,\cL)_{X}\to {\cP}^m_X({\cL}),\]
and we denote by 
\[a^m \colon V_X\to {\cP}^m_X({\cL})\]
the map obtained by composing this map with the homomorphism
\[V_{X}=H^0(\IP(V),\cO_{\IP(V)}(1))_{X}\to H^0(X,\cL)_{X}.\]
The maps $a^m$ are locally just Taylor series expansion up to order
$m$ of the coordinate functions on $X$, with the variables being local
coordinates on $X$.  They are compatible with the
surjections 
\[{\cP}^m_X({\cL})\to {\cP}^{m-i}_X({\cL}).\]

Now we set $\cK_m=\Ker (a^m)$, and consider the maps $\phi_m$ defined by
the commutative diagrams
\[
\begin{CD}
    0 @>>> \cK_{m-1} @>>> V_X @>a^{m-1}>> {\P {m-1}} \\
@. @V{\phi _m}VV @V{a^m}VV @| @. \\
0 @>>> {S^m\Omega ^1_X\otimes {\cL}} @>>> {\P m} 
@>>> {\P {m-1}} @>>> 0 .\\
\end{CD}
\]
It follows that $\cK_m\subseteq \cK_{m-1}$ and that 
$\Ker (\phi_m) = \cK_m$.  
\medskip

\begin{definition} The $m$th fundamental form of $X$ (with respect to $f$)
is the injective homomorphism induced by $\phi_m$, 
\[\Phi_m \colon \cK_{m-1}/\cK_m \to
S^m\Omega ^1_X\otimes \cL .\]
\end{definition}

Let us first briefly compare this definition with the ``classical'' 
fundamental forms, as defined locally (see e.g. \cite{GH}*{(1.18), p.~366
and (1.46), p.~373} for $m=2,3$).  For this, we may assume that $f$ is an
embedding.  Let $T_X:=(\Omega^1_X)^\vee$ denote the tangent bundle
to $X$.  If $m$!  is invertible in $k$, the natural map
\[S^mT_X \to (S^m\Omega^1_X)^\vee\] 
is an isomorphism (see  \cite{AK2}*{Lemma
(2.13), p.~21}, \cite{FH}*{B.~3, p.476}, and \cite{M}*{p.~248}).  Hence we obtain a
map
\[S^mT_X\to \cK^\vee_{m-1}/\cK^\vee_m\otimes {\cL}\] 
by composing with $\Phi_m^\vee\otimes\id_{\cL}$.  

For $m=2$ we get
 \[\phi_2 \colon \cK_1 =\cN_{X/\IP(V)}\otimes \cL \to S^2\Omega ^1_X\otimes \cL ,\]
where $\cN_{X/\IP(V)}$ is the conormal sheaf of $X$ in $\IP(V)$, and hence a map
$S^2T_X \to \cN_{X/\IP(V)}^\vee$, whose fibers  are the classical second fundamental forms.
We shall consider the linear systems induced by the fundamental forms in Section \ref{geo}.
\medskip

Altman and Kleiman \cite{AK1}*{I.3, p.~10} gave a general
definition of the second fundamental form of a subsheaf of a
quasi-coherent sheaf on a scheme.  We shall now recall their
construction, in our situation.

Suppose $\cF$ is a coherent sheaf on $X$ and that 
\[\alpha \colon V_X\to \cF\] is a homomorphism. Set ${\cE}:=\Ker (\alpha)$, $Z:=X\times
\IP(V)$, $Y:=\IP(V_{X}/{\cE})\subseteq Z$, and let $\pr_1 \colon Z\to X$,
$\pr_2\colon Z\to \IP(V)$, and $p \colon Y\to X$ denote the projection morphisms. By 
\cite{AK2}*{Lemma (2.6), p.~17}, 
there is a natural
homomorphism 
\[p^*{\cE}\otimes {\cO}_Y(-1)\to {\cN_{Y/Z}},\] 
where
${\cN_{Y/Z}}$ denotes the conormal sheaf of $Y$ in $Z$, which is an
isomorphism if  ${\cF}$ is
locally free. Now we compose this homomorphism with the homomorphisms
\[{\cN_{Y/Z}}\to
{\Omega ^1_Z}|_Y=(\pr_1^*\Omega ^1_X\oplus \pr_2^*\Omega
^1_{\IP(V)})|_Y\to
\pr_1^* \Omega^1_X|_Y=p^*\Omega^1_X .\] 
Thus we obtain a homomorphism
\[p^* {\cE}\otimes {\cO}_Y(-1)\to p^*\Omega^1_X,\]
 hence also 
\[p^*{\cE}\to
p^*\Omega^1_X\otimes {\cO}_Y(1),\] 
and, by adjunction and the 
projection formula (since
$\Omega^1_X$ is locally free), 
\[{\cE}\to 
p_*(p^*\Omega^1_X\otimes {\cO}_Y(1))\cong
\Omega^1_X\otimes p_*{\cO}_Y(1).\] Let $U\subseteq X$ be the open 
dense subset where
$\alpha$ has constant rank.
On $U$, $p_* {\cO}_Y(1)\cong {\rm { Im }}
(\alpha)=V_{X}/{\cE}$, so, by composing
with the inclusion $V_{X}/{\cE} \to {\cF}$, we finally obtain 
a homomorphism, defined
on $U$,
and denoted $F(\alpha)$, 
\[F(\alpha) \colon \cE|_{U}\to \Omega^1_U\otimes \cF|_{U}.\]

\begin{definition} \cite{AK1}*{I.3, p.~10}.  The second
fundamental form of $\cE:=\Ker (\alpha)$ in $V_X$ is the
homomorphism 
\[F(\alpha) \colon \cE|_U\to (\Omega^1_X\otimes \cF)|_U\]
 constructed
above. 
\end{definition}

By \cite{AK1}*{Thm.  (3.1), p.~11} (see also \cite{F}*{B.5.8, p.~435}), the second
fundamental form of the kernel of the surjection $a^0 \colon  V_{\IP(V)}\to \cO_{\IP(V)}(1)$
identifies this kernel with the sheaf $\Omega_{\IP(V)}^1(1)$.  Via this
identification, we get $\cK_0= f^*\Omega^1_{\IP(V)} \otimes {\cL}$ and
$\phi_1=df\otimes \id_{\cL}$.  Moreover, if $U\subseteq X$ denotes the
open such that $f|_U$ is an embedding, then on $U$,
$\cK_1={\cN}_{f(X)/\IP(V)}|_U \otimes {\cL}$ and $\cK_0/\cK_1 \cong
\Omega^1_X \otimes {\cL}$, where ${\cN}_{f(X)/\IP(V)}$ denotes the conormal
sheaf of $f(X)$ in $\IP(V)$.  Hence $\Phi_1|_U = \id$ is trivial.
\medskip

As remarked by Perkinson \cite{P}*{Remark 2.4, p.~3183}, in this case the map
$\phi_1$, induced by the Taylor map $a^1 \colon V_{\IP(V)}\to \cP^1_{\IP(V)}(1)$, is equal to
$-F(a^0)$.  We shall generalize this in Theorem \ref{fact}.

\begin{proposition} \label{fund}
Assume ${\cF}$ is locally free, with rank $s+1$, and that 
\[\alpha \colon V_X\to \cF\]
 is surjective.  Set $\cE := \Ker
(\alpha)$, and $G:=\Grass_{s+1}(V)$, and let $\psi \colon X\to G$ denote the
morphism corresponding to $\alpha$.  Then the second fundamental form
of $\cE$, 
\[F(\alpha) \colon \cE \to \Omega^1_X\otimes \cF,\]
induces
\[d\psi \colon \psi^* \Omega^1_G\to \Omega^1_X\]
 via the isomorphism
$\psi^* \Omega^1_G\cong {\cE}\otimes \cF^\vee$.
\end{proposition}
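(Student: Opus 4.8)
The plan is to reduce to the universal case over the Grassmannian and to exploit the naturality of the Altman--Kleiman construction $\alpha\mapsto F(\alpha)$ under pullback. Write $0\to\mathcal S\to V_G\to\mathcal Q\to 0$ for the tautological exact sequence on $G=\Grass_{s+1}(V)$, with $\mathcal Q$ of rank $s+1$, so that $T_G\cong\mathcal{H}om(\mathcal S,\mathcal Q)$ and hence $\Omega^1_G\cong\mathcal S\otimes\mathcal Q^\vee$. Since $\psi$ is the morphism classifying the surjection $\alpha$, pulling back the tautological quotient recovers $\alpha$; therefore $\psi^*\mathcal Q\cong\cF$, $\psi^*\mathcal S\cong\Ker(\alpha)=\cE$, and $\psi^*\Omega^1_G\cong\cE\otimes\cF^\vee$, the isomorphism in the statement. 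By tensor--hom adjunction (all sheaves involved being locally free), giving $F(\alpha)\colon\cE\to\Omega^1_X\otimes\cF$ is the same as giving a homomorphism $\cE\otimes\cF^\vee\to\Omega^1_X$; what must be proved is that this adjoint, read through $\cE\otimes\cF^\vee\cong\psi^*\Omega^1_G$, is $d\psi$.

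First I would establish naturality of $F$. Let $g\colon X\to X_0$ be a morphism, $\alpha_0\colon V_{X_0}\to\cF_0$ a surjection onto a locally free sheaf, $\cE_0:=\Ker(\alpha_0)$, and set $\cF:=g^*\cF_0$, $\alpha:=g^*\alpha_0$, so that $\cE:=\Ker(\alpha)=g^*\cE_0$. Tracing the construction recalled before the proposition, and writing $h\colon Z=Z_0\times_{X_0}X\to Z_0$ for the canonical morphism, one has $Y=\IP(\cF)=Y_0\times_{X_0}X$ (using that $\alpha$ is surjective), and $Y$ is the scheme-theoretic preimage $h^{-1}(Y_0)$. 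Since $Y_0\hookrightarrow Z_0$ and $Y\hookrightarrow Z$ are regular immersions of the same codimension $N-s=\rank\cE$, the base-change homomorphism $h^*\cN_{Y_0/Z_0}\to\cN_{Y/Z}$ is an isomorphism; it is compatible with the conormal maps into $\Omega^1_{Z_0}|_{Y_0}$, $\Omega^1_Z|_Y$ and with the projections onto the $X$-factors, the only difference being that the resulting arrow into $p^*\Omega^1_X$ factors as $p^*g^*\Omega^1_{X_0}\xrightarrow{p^*(dg)}p^*\Omega^1_X$. Carrying this through adjunction and the identification $p_*\cO_Y(1)=\cF$ (valid because $U=X$ here) yields
\[F(g^*\alpha_0)=(dg\otimes\id_{\cF})\circ g^*F(\alpha_0).\]

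Apply this with $X_0=G$, $g=\psi$, and $\alpha_0$ the tautological quotient $V_G\to\mathcal Q$; it then remains to identify $F(\alpha_0)\colon\mathcal S\to\Omega^1_G\otimes\mathcal Q$, the claim being that its adjoint is the canonical isomorphism $\mathcal S\otimes\mathcal Q^\vee\xrightarrow{\sim}\Omega^1_G$. For $s=0$ this is the $G=\IP(V)$ case discussed above; in general it is \cite{AK1}*{Thm.~(3.1)} applied to the tautological quotient, and it can in any case be checked in the standard affine charts of $G$, on which $\alpha_0$ equals its own order-one Taylor expansion in the chart coordinates and $F(\alpha_0)$ is the resulting Jacobian, which a direct computation identifies with $\id_{\Omega^1_G}$. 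Granting this, $\psi^*F(\alpha_0)$ has adjoint the canonical isomorphism $\cE\otimes\cF^\vee\xrightarrow{\sim}\psi^*\Omega^1_G$, and composing with $dg=d\psi$ via the displayed formula shows that the adjoint of $F(\alpha)$ is exactly $d\psi\colon\psi^*\Omega^1_G\cong\cE\otimes\cF^\vee\to\Omega^1_X$.

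The main obstacle is the naturality step: one has to check with care that the Altman--Kleiman construction commutes with pullback up to the single twist by $dg$---in particular that $\cN_{Y/Z}$ base-changes as asserted (which forces the regular-immersion-of-the-preimage argument rather than flatness of $\psi$, which may fail) and that no extra term arises in passing from $\Omega^1_Z|_Y$ to $p^*\Omega^1_X$ and adjoining, including keeping the signs straight. After that, the universal computation over $G$ is routine, being the direct analogue of the $\IP(V)$ case already recorded.
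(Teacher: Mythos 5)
Your proposal is correct, but it is worth noting that the paper does not actually prove this proposition: its ``proof'' consists of the citations \cite{AK1}*{I.3, p.~10} and \cite{F}*{B.5.8, p.~435}, the latter being precisely the identification of $T_G$ with $\mathcal{H}om(\mathcal S,\mathcal Q)$ via the universal second fundamental form. What you have done is reconstruct the content of those references: (i) the naturality statement $F(g^*\alpha_0)=(dg\otimes\id_{\cF})\circ g^*F(\alpha_0)$, which you justify correctly --- the key points being that $Y=\IP(\cF)$ is the scheme-theoretic preimage of $Y_0=\IP(\cF_0)$, that both immersions are regular of codimension $N-s=\rank\cE$ so the conormal sheaves base-change isomorphically (consistently with the identification $\cN_{Y/Z}\cong p^*\cE\otimes\cO_Y(-1)$ recalled in the paper), and that the only modification in passing to $p^*\Omega^1_X$ is the twist by $dg$ on the first summand of $\Omega^1_Z|_Y$; and (ii) the universal computation on $G$ identifying the adjoint of $F$ of the tautological quotient with the canonical isomorphism $\mathcal S\otimes\mathcal Q^\vee\cong\Omega^1_G$. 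Your argument type-checks: taking adjoints in $(d\psi\otimes\id_\cF)\circ\psi^*F(\alpha_0)$ yields exactly $d\psi$ precomposed with the isomorphism $\cE\otimes\cF^\vee\cong\psi^*\Omega^1_G$. The one caveat you rightly flag is the sign convention in step (ii); note that the paper's own convention (via Perkinson's remark that $\phi_1=-F(a^0)$ and \cite{AK1}*{Thm.~(3.1)}) takes $F(a^0)$ itself, not its negative, to be the canonical identification, so your normalization is the consistent one. In short: your route is the standard one and supplies a genuine proof where the paper supplies only pointers; the extra care you invest in the conormal base change is exactly where a sloppier argument (e.g.\ one invoking flatness of $\psi$) would go wrong.
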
 

\begin{proof} See \cite{AK1}*{I.3, p.~10} and \cite{F}*{B.5.8, p.~435}.  
    \end{proof}

We shall now show how the $m$th fundamental form of $X$ (with respect
to the map $f \colon X\to \IP(V)$)
is related to the second fundamental form of the kernel $\cK_{m-1}$ of 
the homomorphism
\[a^{m-1} \colon V_X\to \P {m-1}.\]
We have the following result.

\begin{theorem}\label{fact} The second fundamental form of $\cK_{m-1}$,
\[F(a^{m-1}) \colon  \cK_{m-1}\to \Omega^1_X\otimes \P {m-1},\] 
factors through the inclusion 
\[\Omega^1_X\otimes
S^{m-1}\Omega^1_X\otimes {\cL}\hookrightarrow \Omega^1_X\otimes \P 
{m-1},\] 
and the induced homomorphism 
\[\overline \phi_m \colon \cK_{m-1}\to S^m\Omega^1_X\otimes {\cL}\] satisfies 
\[\overline \phi_m=-m\phi_m,\]
where $\phi_m$ is the map used to define the $m$th fundamental form of $X$.
\end{theorem}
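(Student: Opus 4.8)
The plan is to reduce the statement to an explicit local computation and then compare the two maps termwise. Since the Altman--Kleiman form $F(a^{m-1})$ is defined on the open dense $U\subseteq X$ where $a^{m-1}$ has locally constant rank, and $S^m\Omega^1_X\otimes\cL$ is locally free, it suffices to prove the factorization and the identity $\overline\phi_m=-m\phi_m$ over $U$; being an equality of maps of locally free sheaves, it then holds on all of $X$. So first I would fix a point of $U$, local coordinates $x_1,\dots,x_r$, a basis $v_1,\dots,v_N$ of $V$, and a local generator $e$ of $\cL$, and write $v_j\mapsto g_je$ for the induced local sections of $\cL$. Recalling that $a^m$ is $\cO_X$-linear and that $a^m(v_j)$ is the $m$-jet $\sum_{|I|\le m}(\partial^{[I]}g_j)\,\delta^I e$ of $g_je$ (in multi-index notation, $\delta_i=1\otimes x_i-x_i\otimes 1$, with $\partial^{[I]}$ the Hasse--Schmidt derivative, so that no denominators occur and everything is characteristic free), one gets $a^m\bigl(\sum_j f_jv_j\bigr)=\sum_{|I|\le m}\bigl(\sum_j f_j\,\partial^{[I]}g_j\bigr)\,\delta^I e$. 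Hence $\cK_{m-1}$ consists of the $v=\sum_j f_jv_j$ with $\sum_j f_j\,\partial^{[I]}g_j=0$ for $|I|\le m-1$, and for such $v$ the element $a^m(v)=\sum_{|I|=m}\bigl(\sum_j f_j\,\partial^{[I]}g_j\bigr)\,\delta^I e$ is, under the identification $\delta^I e\leftrightarrow (dx)^I\otimes e$ of the bottom term of the order filtration of $\P m$ with $S^m\Omega^1_X\otimes\cL$ (no factorials, since the associated graded of $\P m$ is $\bigoplus_{i\le m}S^i\Omega^1_X\otimes\cL$), precisely $\phi_m(v)$.

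Next I would unwind the Altman--Kleiman construction of $F(\alpha)$ for $\alpha=a^{m-1}$. For a subsheaf of the trivial bundle $V_X$, that construction is, up to one overall sign, the tautological connection $\nabla^0$ followed by projection onto $\Omega^1_X\otimes(V_X/\cK_{m-1})\hookrightarrow\Omega^1_X\otimes\P{m-1}$; concretely, for $v=\sum_j f_jv_j\in\cK_{m-1}$,
\[F(a^{m-1})(v)=\pm\sum_j df_j\otimes a^{m-1}(v_j)=\pm\sum_k dx_k\otimes\Bigl(\sum_{|I|\le m-1}\bigl(\sum_j (\partial_kf_j)\,\partial^{[I]}g_j\bigr)\,\delta^I e\Bigr).\]
The overall sign I would fix by specializing to $m=1$, where the construction must reproduce Perkinson's relation $\phi_1=-F(a^0)$; this forces the sign to be $+$.

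The heart of the proof is then purely combinatorial, using only that $v\in\cK_{m-1}$. Differentiating the relation $\sum_j f_j\,\partial^{[I]}g_j=0$ with respect to $x_k$ and using $\partial_k\partial^{[I]}=(I_k+1)\,\partial^{[I+e_k]}$ yields, for $|I|\le m-2$, the identity $\sum_j (\partial_kf_j)\,\partial^{[I]}g_j=0$ (the surviving term $\partial^{[I+e_k]}$ still has order $\le m-1$). This annihilates every term with $|I|<m-1$ in the formula above, so $F(a^{m-1})(v)$ lies in $\Omega^1_X\otimes S^{m-1}\Omega^1_X\otimes\cL$, which is the claimed factorization. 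For $|J|=m$ and $J_k\ge 1$, the same differentiation applied to the relation with multi-index $J-e_k$ gives $\sum_j (\partial_kf_j)\,\partial^{[J-e_k]}g_j=-J_k\sum_j f_j\,\partial^{[J]}g_j$. Composing with the multiplication map $\Omega^1_X\otimes S^{m-1}\Omega^1_X\to S^m\Omega^1_X$ (so $dx_k\otimes(dx)^{J-e_k}\mapsto (dx)^J$), summing over $k$, and using $\sum_k J_k=m$, I obtain $\overline\phi_m(v)=-m\sum_{|J|=m}\bigl(\sum_j f_j\,\partial^{[J]}g_j\bigr)(dx)^J\otimes e=-m\,\phi_m(v)$.

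The only genuine difficulty here is bookkeeping: getting the normalizations and the single overall sign right. For the normalization, the point is that the bottom term of the order filtration of $\P m$ really is $S^m\Omega^1_X\otimes\cL$ via the monomial identification, with no factorials, so no restriction on the characteristic is needed (in contrast to the isomorphism $S^mT_X\cong(S^m\Omega^1_X)^\vee$ discussed earlier). For the sign, rather than chase it through the conormal-sheaf definition of $F$, I would just calibrate with $m=1$. I would also record the coordinate-free content of the calculation as a remark: on $\cK_{m-1}$ one has $F(a^{m-1})=-\delta\circ a^m$, where $\delta\colon\P m\to\cP^1_X(\P{m-1})$ is the canonical comultiplication of sheaves of principal parts --- this follows from $a^{m-1}|_{\cK_{m-1}}=0$ together with the compatibility $d^1\circ d^{m-1}=\delta\circ d^m$ --- and the restriction of $\delta$ to the bottom term $S^m\Omega^1_X\otimes\cL$ of $\P m$ is the canonical comultiplication $S^m\Omega^1_X\to\Omega^1_X\otimes S^{m-1}\Omega^1_X$ tensored with $\id_\cL$; since composing that with multiplication equals $m\cdot\id_{S^m\Omega^1_X}$, this yields both the factorization and $\overline\phi_m=-m\phi_m$ at once.
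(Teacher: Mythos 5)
Your proof is correct and follows essentially the same route as the paper's: the same local computation with Hasse derivatives, the same differentiation of the kernel relations to get the vanishing for $|I|\le m-2$ and the identity $\sum_j(\partial_k f_j)\,\partial^{[J-e_k]}g_j=-J_k\sum_j f_j\,\partial^{[J]}g_j$ for $|J|=m$, and the same final summation using $\sum_k J_k=m$. The only cosmetic differences are that you calibrate the overall sign of $F(a^{m-1})$ against the $m=1$ case (Perkinson's relation) rather than reading it off directly from the Altman--Kleiman construction, and that you append a coordinate-free reformulation via the comultiplication of principal parts, which the paper does not include.
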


\begin{proof} In order to show that $F(a^{m-1})$ factors as stated, we must show that
the image of $F(a^{m-1})(\cK_{m-1})$
is zero under the homomorphism 
\[\Omega^1_X\otimes \P {m-1}\to \Omega^1_X\otimes \P {m-2}.\]
This can be
checked locally around a point $x\in X$: Let $u_1$,...,$u_r$ be local 
parameters for $X$ at $x$
(i.e., generators for the maximal ideal ${\mathfrak m}$ of the 
local ring of $X$ at 
$x$), and let $x_0,\ldots,x_N\in
{\cO}_{X,x}\cong {\cO}_X(1)_x$ be the images of a basis $X_{0},\ldots,X_{N}$ for
$V=H^0(\IP(V),{\cO}_{\IP(V)}(1))$, so that the $x_j$ are functions of $u_1,\dots,u_r$.  A set of generators for the free
${\cO}_{X,x}-$module 
\[{\P {m-1}}_x \cong {\cP}^{m-1}_{X,x} \cong
{\cO}_{X,x}\otimes {\cO}_{X,x}/{\mathfrak m}^m_x\]
is then $\{
du^I\}_{|I|\leq {m-1}}$, where $I=(i_1,...,i_r)$ and
$du^I=(du_1)^{i_1}\cdot \cdot \cdot (du_r)^{i_r}$.  
Set $\partial_i:=\partial/\partial u_i$ and $\partial^I:=\partial_{i_1}^{i_1}\cdots \partial_{i_r}^{i_r}$.
The map $a^{m-1}_x$ is given by 
\[a^{m-1}_x(1\otimes X_j)=\sum_{|I|\leq{m-1}} D_I x_j du^I\]
 for $j=0,\ldots,N$, where $D_I:  =\frac{1}{i_1!\cdots i_r!}\partial^I$ is the Hasse
differential operator ``dual'' to $du^I$ given by the coefficients in
Taylor series expansions. In particular, $D_I(du^J)=\delta_{IJ}$ (the Kronecker delta).  (See \cite{EGA}*{16.11.2}, and
\cite{M}*{p.~248}.)

 From \cite{AK1}*{I.3, p.~10} it follows that the map $F(a^{m-1})_x$ is given 
by sending an element $g \in
(\cK_{m-1})_x$, 
\[g=\sum_{j=0}^N {g_j\otimes X_j}\in 
{\cO}_{X,x}\otimes V\]
to 
\[\sum_{j=0}^N
{d(g_j)\otimes a_x^{m-1}(1\otimes X_j)}\in \Omega^1_{X,x}\otimes {\P {m-1}}_x\]
(here $d=d_{x} \colon {\cO}_{X,x}\to \Omega^1_{X,x}\cong \oplus_{k=1}^r{\cO}_{X,x}du_k$, so that 
$dg=\sum_{k=1}^r {\partial}_k(g)
du_k$, where ${\partial}_k=D_{(0,...,1,...,0)}$ is differentiation 
with respect to $u_k$).

We need to show that the image of $g$ in 
$\Omega^1_{X,x}\otimes {\P {m-2}}_x$ is zero.
But this image is equal to 
\begin{align*}
\sum_{j=0}^N d(g_j)\otimes a_x^{m-2}(1\otimes 
x_j)
&=\sum_{j=0}^N \sum_{k=1}^r {\partial}_k g_j du_k\otimes 
\sum_{|I|\leq {m-2}} D_I x_j du^I \\
    &=\sum_{k=1}^r du_k\otimes \sum_{j=0}^N \partial_k g_j \sum_{|I|\leq 
{m-2}} D_I x_j du^I.
\end{align*}
Since $g\in (\cK_{m-1})_x=\Ker (a_x^{m-1})$, we have
\[0=a_x^{m-1}(\sum_{j=0}^N g_j\otimes x_j)=\sum_{j=0}^N g_j
\sum_{|I|\leq {m-1}} D_I x_j du^I.\]
 Since the $du^I$'s generate the
free ${\cO}_{X,x}$-module ${\P {m-1}}_x$, this implies that
\begin{equation}\label{(*)}
\sum_{j=0}^N g_j D_I x_j =0 \hbox{ \rm for each $I$ with } |I|\leq {m-1}. 
\end{equation} 
Applying the
differential operators $\partial_k$ to these equations, we get, for $|I|\le m-1$,
\[0=\partial_k(\sum_{j=0}^N g_j D_I
x_j)=\sum_{j=0}^N(\partial_k g_j\cdot D_I x_j+g_j\cdot \partial_k D_I x_j)\]
so that
\[\sum_{j=0}^N \partial_k g_j D_I x_j=-\sum_{j=0}^N g_j\cdot 
\partial_k D_I x_j=-(i_k+1)\sum_{j=0}^N
g_j\cdot D_{I_k} x_j\]
where we have set  $I_k:=(i_1,...,i_{k-1},i_k+1,i_{k+1},...,i_r)$ if $I=(i_1,...,i_r)$.

 From (\ref{(*)}) it therefore follows that 
\begin{equation}\label{(**)}
 \sum_{j=0}^N \partial_k 
g_j\cdot D_I x_j=0 \hbox{ \rm for } |I|\leq {m-2}, 
\end{equation}
and we also get 
\begin{equation}\label{(***)}
\sum_{j=0}^N \partial_k g_j\cdot D_I
x_j =  -(i_k+1)\sum_{j=0}^N g_j\cdot D_{I_k} x_j \hbox{ \rm if }
 |I|= {m-1}. \end{equation}
Because of (\ref{(*)}), $F(a^{m-1})$ factors as stated, and we shall use 
(\ref{(**)}) to compare the induced
map $\overline \phi_m$ with $\phi_m$.

If $g\in (\cK_{m-1})_x$ is as above, $\phi_m$ is given locally by  
\begin{align*}
\phi_m(g)&=a^m(\sum_{j=0}^N g_j\otimes X_j)
=\sum_{j=0}^N g_j \sum_{|I|\leq m} D_I x_j du^I \\
&=\sum_{j=0}^N g_j \sum_{|I|=m} D_I x_j du^I 
=\sum_{|I|=m} (\sum_{j=0}^N g_j D_I x_j)du^I.
\end{align*}
The map $\overline \phi_m$ is given by 
\begin{align*}
\overline \phi_m(g)&= \overline
\phi_m(\sum_{j=0}^N g_j\otimes x_j) 
=\sum_{j=0}^N dg_j\otimes a_x^{m-1}(1\otimes x_j) \\
&=\sum_{j=0}^N \sum_{k=1}^r \partial_k g_j \sum_{|I|\leq {m-1}} D_I 
x_j du_k du^I 
=\sum_{|I|=m-1}
\sum_{k=1}^r \sum_{j=0}^N \partial_k g_j \cdot D_I x_j du_k du^I \\
&=-\sum_{|I|=m-1} \sum_{j=0}^N g_j \sum_{k=1}^r (i_k+1) D_{I_k} x_j 
du_k du^I \\
&=-\sum_{j=0}^N g_j \sum_{|I|=m-1}\sum_{k=1}^r (i_k+1)D_{I_k} du_k 
du^I\\
&=-\sum_{j=0}^N g_j \sum_{|J|=m}(\sum_{k=1}^r j_k)D_J du^J
=-m\sum_{|J|=m}
\sum_{j=0}^N g_j \cdot D_J x_j du^J,
\end{align*}
where we used (\ref{(**)}) and (\ref{(***)}).
This completes the proof of the theorem.
\end{proof}

Given $f \colon X\to \IP(V)$ as before. Recall that the $m$th order osculating space of
$X$ at a point $x\in X$  is defined to be the subspace $\Osc^m_X(x):=\IP(\Image a^m(x)) \subset \mathbb P(V)$.
We 
let $s(m)$ denote the
dimension of $\Osc^m_X(x)$ for a general point $x$, i.e., the map
\[a^m \colon V_X\to \P m\]
 has generic rank $s(m)+1$. Let $U_m\subseteq X$ be
the open dense where $a^m$ has this rank. 
Set ${\cP}_{m} := { \Image} (a^{m})$. On $U_{m}$ the sheaf ${\cP}_{m}$ is
a $(s(m)+1)$-bundle, the $m$th \emph{osculating bundle} of $X$.
Hence there is a rational map, which
is a morphism on $U_m$, 
\[\psi_m \colon X\dashrightarrow \Grass_{s(m)+1}(V),\]
called the
$m$th Gauss map, or $m$th associated map, of $X$ (see \cite{P83}*{p.~336}).

We saw in Proposition \ref{fund} that the second fundamental
form of the kernel $\cK_{m-1}$ of
\[a^{m-1} \colon V_X\to {\cP}_{m-1}\]
 restricted to $U_{m-1}$, induces 
 \[ d\psi_{m-1} \colon \cK_{m-1}\otimes {\cP}_{m-1}^\vee\to \Omega^1_X,\]
and hence a map (on $U_{m-1}$) 
\[\cK_{m-1}\to \Omega^1_X
\otimes {\cP}_{m-1}\hookrightarrow \Omega^1_X\otimes \P {m-1}.\]
It follows from Theorem \ref{fact} that this  map factors through $\Omega^1_X\otimes
S^{m-1}\Omega^1_X\otimes {\cL}$, and hence we get an induced map 
\[\overline \phi_m \colon  \cK_{m-1}\to S^m\Omega^1_X\otimes {\cL}.\]
The equality
\[\overline\phi_m=-m\phi_m.\] 
can be interpreted as a verification of the statement ``the
$m$th fundamental form of $X$ is equal to the differential of the $(m-1)$th
Gauss map'' (cf. \cite{GH}*{(1.62), p.~379}, 
\cite{L}*{Remark, p.~307}, \cite{DeDiI}*{Thm.~1.18, p.~1202}, and \cite{EN}*{Thm.~3.3}). 

\begin{remark}
In \cite{GH}*{(1.52), p.~376} and in the introduction of \cite{EN}, it is noted that the vanishing of the third fundamental form implies that $X$ is
contained in its second order osculating space at a general point of $X$. In fact, as stated in \cite{EN}*{Remark~2.14}, this statement generalizes to higher order fundamental forms. A simple way to see this is as follows.

Consider the $m$th fundamental form $\Phi_m \colon \cK_{m-1}/\cK_m \to S^m\Omega^1_X\otimes \cL$.
Assume $\Phi_m$ is zero on $U_m$, where $U_m\subseteq X$ denotes the open on which $\rank a^m$ is constant. Since $\Phi_m$ is (generically) injective, it follows that $\cK_{m-1}=\cK_m$ on $U_m$. Hence we get an equality of osculating bundles $\cP_m|_{U_m}=\cP_{m-1}|_{U_m}$. This means that ``adding derivatives'' does not make the $m$th osculating  spaces bigger than the $(m-1)$th. But this can only happen if the $(m-1)$th osculating spaces are constant, which implies that $X$ is contained in $\Osc^{m-1}_X(x)$, for (any) $x\in U_m$.
\end{remark}

\section{Geometric interpretation}\label{geo}
Now we turn to the geometric interpretation of the fundamental forms as
linear systems on the projectivized tangent spaces of $X$.

Let $x\in X$ be a point such that  dim $\Osc^m_X(x)=s(m)$ and consider
the linear subspace 
\[\cK_{m-1}(x)\subseteq V= H^0(\IP(V),{\cO}_{\IP(V)}(1))\]
whose elements correspond to hyperplanes containing $\Osc^{m-1}_X(x)$.
At the point $x$, the map $a^{m-1}$ is Taylor series expansion:
\[a^{m-1}(x) \colon V\to {\P {m-1}}(x) \cong {\cO}_{X,x}/\mathfrak{m}^m_x.\]
Hence, if $h\in V$, then $h\in \cK_{m-1}(x)$ iff $a^{m-1}(x)(h)\in
\mathfrak{m}^m_x$. In particular, this shows that the hyperplane $H$ defined 
by $h=0$ is
such that $x\in X\cap H$ is a point of multiplicity $\geq m$, and that this
multiplicity is $> m$ iff $H\supseteq \Osc^m_X(x)$.

Consider the induced map
\begin{align*} \Phi_m(x) \colon \cK_{m-1}(x)/\cK_m(x)\to &(S^m\Omega^1_X\otimes 
{\cL})(x)\cong
\mathfrak{m}^m_x/\mathfrak{m}^{m+1}_x \\ 
&\cong H^0(PT(x), {\cO}_{PT(x)}(m)),
\end{align*}
where we have set
$PT(x):={\IP}(\mathfrak{m}_x/\mathfrak{m}^2_x)$, the projectivized tangent space
to $X$ at $x$.
Thus $\Phi_m(x)$, the $m$th fundamental form of $X$ at $x$, gives 
rise to 
a linear system of degree $m$ and dimension $s(m)-s(m-1)-1$ on
$PT(x)\cong {\IP}^{r-1}$.
Denote this linear system by $|\Phi_m(x)|$.
The geometric interpretation of the members of this linear system is 
as follows:
Let $H$ be the hyperplane defined by $h\in \cK_{m-1}(x)$. Then there 
is an exact
sequence of ${\cO}_X$-modules 
\[0\to (h)\to \mathfrak{m}_{X,x} \to \mathfrak{m}_{X\cap H,x}\to 0\]
and a surjection
\[\mathfrak{m}_{X,x}^m/\mathfrak{m}_{X,x}^{m+1}\to \mathfrak{m}_{X\cap 
H,x}^m/\mathfrak{m}_{X\cap H,x}^{m+1}.\]
 Then the inclusion 
\begin{align*}
\Proj(\oplus_{m\geq 0}
\mathfrak{m}_{X\cap H,x}^m/\mathfrak{m}_{X\cap H,x}^{m+1}) &\hookrightarrow
\Proj(\oplus_{m\geq 0}\mathfrak{m}_{X,x}^m/\mathfrak{m}_{X,x}^{m+1})\\ 
&={\IP}(\mathfrak{m}_x/\mathfrak{m}^2_x) =PT(x)
\end{align*}
is given as the zeroes of
$\Phi_m(x)(h)\in H^0(PT(x),{\cO}_{PT(x)}(m))$.
\medskip

We have thus shown the following:

\begin{proposition}\label{3} Let $U_m\subseteq X$
be the open dense such that dim $\Osc^i_X(x)=s(i)$ for $i=m-1,m$. Then
the $m$th fundamental form 
\[\Phi_m \colon cK_{m-1}/\cK_m\to S^m\Omega^1_X\otimes {\cL}\]
gives a family (over $U_m$) of linear
systems $|\Phi_m(x)|$ on $PT(x)\cong {\IP}^{r-1}$, of degree $m$ and
dimension $s(m)-s(m-1)-1$. The members of $|\Phi_m(x)|$ are the projectivized
tangent cones of $X\cap H$ at $x$, for $H\in {\IP}(V^\vee)$, $H\supseteq
\Osc^{m-1}_X(x)$, $H\not\supseteq \Osc^m_X(x)$.
\end{proposition}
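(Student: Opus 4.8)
The statement is essentially a summary of the discussion preceding it, and the plan is to organise that discussion into a proof by fixing a point $x\in U_m$ and systematically passing to fibres at $x$. First I would record the identification of the target fibre: since $X$ is smooth at $x$, the local ring $\cO_{X,x}$ is regular, so $\bigoplus_{i\ge 0}\mathfrak m_x^i/\mathfrak m_x^{i+1}$ is a polynomial ring in $r$ variables and $\mathfrak m_x^m/\mathfrak m_x^{m+1}\cong S^m(\mathfrak m_x/\mathfrak m_x^2)$; tensoring with the one-dimensional space $\cL(x)$ and using $\Omega^1_X(x)=\mathfrak m_x/\mathfrak m_x^2$ gives $(S^m\Omega^1_X\otimes\cL)(x)\cong S^m(\mathfrak m_x/\mathfrak m_x^2)=H^0(PT(x),\cO_{PT(x)}(m))$, the space of degree-$m$ forms on $PT(x)\cong\IP^{r-1}$. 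Next, by the definition of $U_m$ both $\cP_{m-1}$ and $\cP_m$ are vector bundles there, so $\cK_{m-1}=\Ker(a^{m-1}\colon V_X\to\cP_{m-1})$ and $\cK_m=\Ker(a^m\colon V_X\to\cP_m)$ are locally free on $U_m$ and their formation commutes with passing to fibres; in particular $\cK_{m-1}(x)=\Ker\big(a^{m-1}(x)\big)$, and since $a^{m-1}(x)\colon V\to\cO_{X,x}/\mathfrak m_x^m$ is truncated Taylor expansion, this is exactly the space of hyperplanes containing $\Osc^{m-1}_X(x)$, as asserted in the discussion above.

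For the numerical assertions I would compute $\rank\cK_i=N-s(i)$ on $U_m$, so that $\cK_{m-1}/\cK_m$ is locally free of rank $s(m)-s(m-1)$ there and $\cK_{m-1}(x)/\cK_m(x)$ has dimension $s(m)-s(m-1)$. The key point, and the step I expect to require the most care, is that $\Phi_m(x)$ is injective for $x\in U_m$. For this I would observe that $\Image\phi_m=\Ker(\cP_m\to\cP_{m-1})$ as subsheaves of $\P m$ (a local section of $V_X$ lands in $\cK_{m-1}$ precisely when its image under $\cP_m\to\cP_{m-1}$ vanishes), that on $U_m$ this kernel is a subbundle of $\cP_m$ and hence of $\P m$, and that it lies inside the subbundle $S^m\Omega^1_X\otimes\cL$; since a subbundle contained in a subbundle is again a subbundle, $\Phi_m$ realises $\cK_{m-1}/\cK_m$ as a subbundle of $S^m\Omega^1_X\otimes\cL$ over $U_m$, and therefore $\Phi_m(x)$ is injective with image of dimension $s(m)-s(m-1)$. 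Projectivising this image inside $H^0(PT(x),\cO_{PT(x)}(m))$ yields the linear system $|\Phi_m(x)|$, of degree $m$ and projective dimension $s(m)-s(m-1)-1$.

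It remains to identify the members geometrically. Given $h\in\cK_{m-1}(x)\setminus\cK_m(x)$, let $H=\{h=0\}$; the conditions $H\supseteq\Osc^{m-1}_X(x)$ and $H\not\supseteq\Osc^m_X(x)$ translate precisely into this membership, and, writing $\bar h\in\cO_{X,x}$ for the image of $h$ after trivialising $\cL$ near $x$, they say $\bar h\in\mathfrak m_x^m\setminus\mathfrak m_x^{m+1}$, with $\Phi_m(x)$ applied to the class of $h$ equal to the initial form of $\bar h$ in $\bigoplus_i\mathfrak m_x^i/\mathfrak m_x^{i+1}$. From the exact sequence $0\to(\bar h)\to\mathfrak m_{X,x}\to\mathfrak m_{X\cap H,x}\to 0$ one obtains the surjection of associated graded rings $\bigoplus_i\mathfrak m_{X,x}^i/\mathfrak m_{X,x}^{i+1}\to\bigoplus_i\mathfrak m_{X\cap H,x}^i/\mathfrak m_{X\cap H,x}^{i+1}$ whose kernel is the ideal of initial forms of the elements of $(\bar h)$; since $\cO_{X,x}$ is regular its associated graded ring is a domain, so initial forms are multiplicative and this ideal is the principal ideal generated by the initial form of $\bar h$, i.e.\ by $\Phi_m(x)(h)$. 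Applying $\Proj$ then identifies the projectivised tangent cone $\Proj\big(\bigoplus_i\mathfrak m_{X\cap H,x}^i/\mathfrak m_{X\cap H,x}^{i+1}\big)$ of $X\cap H$ at $x$ with the degree-$m$ hypersurface $\{\Phi_m(x)(h)=0\}$ in $PT(x)$, which is precisely the member of $|\Phi_m(x)|$ attached to $h$, completing the plan.
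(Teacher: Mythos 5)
Your proposal is correct and follows essentially the same route as the paper, whose proof of Proposition \ref{3} is precisely the discussion preceding it: identify $(S^m\Omega^1_X\otimes\cL)(x)$ with $\mathfrak m_x^m/\mathfrak m_x^{m+1}\cong H^0(PT(x),\cO_{PT(x)}(m))$, interpret $\cK_{m-1}(x)$ as hyperplanes containing $\Osc^{m-1}_X(x)$, and read off the projectivized tangent cone of $X\cap H$ from the surjection of associated graded rings. You add two details the paper leaves implicit --- the subbundle argument for the fibrewise injectivity of $\Phi_m(x)$ on $U_m$, and the observation that the initial ideal of $(\bar h)$ is principal because the associated graded ring of a regular local ring is a domain --- and both are handled correctly.
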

\medskip

The \emph{Jacobian} of a linear
system of divisors on a projective space  is the linear system generated by
the partial derivatives of the members of the original system. 
More generally, let $X$ be a variety and assume
$\cK\subseteq S^m\Omega^1_X$ is a subsheaf. Define the \emph{Jacobian} $J(\cK)$ of $\cK$ 
to be the image in $S^{m-1}\Omega^1_X$ of $\cK \otimes (\Omega^1_X)^\vee$ 
under the natural contraction map 
\[S^m\Omega^1_X \otimes (\Omega^1_X)^\vee\to 
S^{m-1}\Omega^1_X,\]
given locally by sending $v_1\cdots v_m\otimes w^\vee$ to 
$\sum_{i=1}^m w^\vee (v_i)v_1\cdots \widehat{v_i}\cdots v_m$ \cite{FH}*{(B.14), p.~476}.
The next theorem says that the Jacobian of the linear systems associated to the
$m$th fundamental form is contained in the linear systems associated 
to the $(m-1)$th  fundamental form. This result was stated in \cite{GH}*{(1.47), p.~373}, where an
analytic proof was sketched in the case $m=3$. See also \cite{Lbook}*{4.2}, \cite{DeDiI}*{Thm.~1.12, p.~1199}, \cite{DeI}*{Cor.~3.5, p.~5143}, and \cite{EN}*{Remark~2.14}.

\begin{theorem}\label{jac}
The Jacobian of the linear system $|\Phi_m(x)|$ is contained in the linear system $|\Phi_{m-1}(x)|$.
\end{theorem}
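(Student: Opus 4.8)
The plan is to reduce the statement to a short local computation that runs parallel to the proof of Theorem~\ref{fact}, reusing the identities (\ref{(**)}) and (\ref{(***)}) established there. Throughout, fix $x$ in the dense open subset $U\subseteq X$ on which $a^{m-2}$, $a^{m-1}$ and $a^m$ all have constant (generic) rank; then $\cK_{m-2}$, $\cK_{m-1}$ and $\cK_m$ are subbundles of $V_X$ near $x$, their fibres at $x$ are the subspaces $\cK_i(x)\subseteq V$ of hyperplanes through $\Osc^i_X(x)$, and taking the fibre at $x$ commutes with taking images of the maps $\phi_i$. I keep the notation of the proof of Theorem~\ref{fact}: local parameters $u_1,\dots,u_r$ at $x$, functions $x_0,\dots,x_N$, Hasse operators $D_I$, $\partial_k=\partial/\partial u_k$, and $I_k$ for the multi-index obtained from $I$ by raising its $k$th entry by one.

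The first step is to note that, in the coordinates $du_1,\dots,du_r$ on $PT(x)$, the partial derivative $\partial Q/\partial(du_k)$ of a form $Q\in H^0(PT(x),\cO_{PT(x)}(m))$ equals the contraction $Q\lrcorner (du_k)^\vee$ under the map of \cite{FH}*{(B.14), p.~476}. Consequently the Jacobian of $|\Phi_m(x)|$ is spanned by the elements $\Phi_m(x)(h)\lrcorner (du_k)^\vee$, for $h\in\cK_{m-1}(x)$ and $1\le k\le r$, so it suffices to prove that each of these lies in the image of $\Phi_{m-1}(x)$.

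For the main step, fix $h\in\cK_{m-1}(x)$ and lift it (possible since $\cK_{m-1}$ is a subbundle near $x$) to a local section $g=\sum_j g_j\otimes X_j$ of $\cK_{m-1}$ with $g(x)=h$, and set $\partial_k g:=\sum_j(\partial_k g_j)\otimes X_j$. Equation (\ref{(**)}) says precisely that $a^{m-2}(\partial_k g)=0$, so $\partial_k g$ is a local section of $\cK_{m-2}$ and in particular $(\partial_k g)(x)\in\cK_{m-2}(x)$. As in the proof of Theorem~\ref{fact}, near $x$ one has
\[\phi_m(g)=\sum_{|I|=m}\Bigl(\sum_j g_j\,D_I x_j\Bigr)du^I,\qquad \phi_{m-1}(\partial_k g)=\sum_{|I|=m-1}\Bigl(\sum_j \partial_k g_j\,D_I x_j\Bigr)du^I ;\]
contracting the first with $(du_k)^\vee$, reindexing, and using (\ref{(***)}) gives the identity
\[\phi_m(g)\lrcorner (du_k)^\vee=-\phi_{m-1}(\partial_k g).\]
Since contraction commutes with evaluation at $x$, and $\phi_{m-1}(s)(x)=\Phi_{m-1}(x)(s(x))$ for every local section $s$ of $\cK_{m-2}$, evaluating at $x$ yields
\[\Phi_m(x)(h)\lrcorner (du_k)^\vee=-\,\Phi_{m-1}(x)\bigl((\partial_k g)(x)\bigr)\in\Image\Phi_{m-1}(x),\]
which completes the proof.

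The only delicate point I anticipate is the bookkeeping that passes from these stalk/sheaf identities to the fibrewise statements about $|\Phi_m(x)|$ and $|\Phi_{m-1}(x)|$: one must check that $x$ may be taken in the common constant-rank locus $U$, that $h$ genuinely lifts to a local section of the subbundle $\cK_{m-1}$, and that the displayed equalities of sections descend to the claimed equalities in the fibres at $x$. The algebraic core of the argument — the identity $\phi_m(g)\lrcorner (du_k)^\vee=-\phi_{m-1}(\partial_k g)$ — is immediate from the computations already performed in the proof of Theorem~\ref{fact}, since it uses exactly the local expressions for $\phi_m$ and $\phi_{m-1}$ and the relation (\ref{(***)}) derived there.
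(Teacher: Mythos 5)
Your proof is correct and follows essentially the same route as the paper, which proves the theorem via Proposition \ref{4} by restricting to the constant-rank locus and performing exactly this local contraction computation with the identities (\ref{(**)}) and (\ref{(***)}) from the proof of Theorem \ref{fact}. Your explicit identity that the contraction of $\phi_m(g)$ with $(du_k)^\vee$ equals $-\phi_{m-1}(\partial_k g)$, together with the observation that $\partial_k g$ is a section of $\cK_{m-2}$ by (\ref{(**)}), in fact makes precise the final step of the paper's argument, whose printed version writes the preimage as $g$ itself rather than $\partial_k g$ (for $g\in\cK_{m-1}$ the coefficients $\sum_j g_j D_J x_j$ with $|J|=m-1$ vanish, so that reading would make the contraction trivially zero).
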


\begin{proof}
The theorem follows from the next proposition.
\end{proof}

\begin{proposition}\label{4}
Set $\cK:=\Phi_m(\cK_{m-1}/\cK_{m})\otimes \cL^{-1}\subseteq S^m\Omega^1_X$.
Then 
\[J(\mathcal K)\subseteq \Phi_{m-1}(\cK_{m-2}/\cK_{m-1})\otimes \cL^{-1}\subseteq S^{m-1}\Omega^1_X.\]
\end{proposition}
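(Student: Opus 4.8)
The plan is to verify the inclusion locally, using the explicit Taylor-series description of $\phi_m$ that was extracted in the proof of Theorem \ref{fact}. Fix a point $x\in U_m$ and local parameters $u_1,\dots,u_r$, with $x_0,\dots,x_N$ the images of a basis of $V$. Recall from the proof of Theorem \ref{fact} that for $g=\sum_j g_j\otimes X_j\in(\cK_{m-1})_x$ one has
\[\phi_m(g)=\sum_{|J|=m}\Bigl(\sum_{j=0}^N g_j\,D_Jx_j\Bigr)du^J,\]
so the section $\Phi_m(g)\otimes\cL^{-1}\in S^m\Omega^1_X$ is, up to the local trivialization, the symmetric tensor with coefficients $c_J:=\sum_j g_jD_Jx_j$. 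Applying the contraction map against $\partial_k^\vee=(du_k)^\vee$ sends this to the element of $S^{m-1}\Omega^1_X$ whose coefficient at $du^I$ (for $|I|=m-1$) is $(i_k+1)c_{I_k}$, i.e. a constant times $\sum_j g_j D_{I_k}x_j$. So I must produce an element $h\in(\cK_{m-2})_x$ whose image under $\phi_{m-1}$ has, up to scalar, exactly these coefficients $\sum_j g_j D_{I_k}x_j$ for $|I|=m-1$.

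The natural candidate is $h:=\partial_k g=\sum_j(\partial_kg_j)\otimes X_j$. First I would check that $h\in(\cK_{m-2})_x$: this is precisely equation $(\ref{(**)})$ from the proof of Theorem \ref{fact}, namely $\sum_j\partial_kg_j\cdot D_Ix_j=0$ for $|I|\le m-2$, which holds because $g\in\cK_{m-1}$. Next, $\phi_{m-1}(h)=\sum_{|I|=m-1}\bigl(\sum_j\partial_kg_j\cdot D_Ix_j\bigr)du^I$, and equation $(\ref{(***)})$ gives $\sum_j\partial_kg_j\cdot D_Ix_j=-(i_k+1)\sum_j g_j\cdot D_{I_k}x_j$ for $|I|=m-1$. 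Comparing with the contraction computation above, the section $\phi_{m-1}(\partial_kg)$ agrees, up to a nonzero scalar depending only on $m,k$, with the $\partial_k^\vee$-contraction of $\phi_m(g)$. Letting $k$ range over $1,\dots,r$ and $g$ over local generators of $\cK_{m-1}/\cK_m$, the contractions of $\Phi_m(g)\otimes\cL^{-1}$ against all of $(\Omega^1_X)^\vee$ — which by definition generate $J(\cK)$ — are thus all contained in the image of $\phi_{m-1}$, i.e. in $\Phi_{m-1}(\cK_{m-2}/\cK_{m-1})\otimes\cL^{-1}$. Since both sides are coherent subsheaves of $S^{m-1}\Omega^1_X$ and the inclusion holds on the dense open $U_m$, it holds everywhere; passing to fibers over $x\in U_m$ and projectivizing gives Theorem \ref{jac}.

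The main obstacle is bookkeeping rather than conceptual: one must be careful that the two local descriptions (the contraction formula for $J$ coming from $(B.14)$ of \cite{FH}, and the Taylor expansion formula for $\phi_{m-1}$) are normalized compatibly, so that the combinatorial factors $(i_k+1)$ — equivalently the $\sum_k j_k=m$ type identities already used at the end of the proof of Theorem \ref{fact} — match up to a single global nonzero scalar and not a position-dependent one. A secondary, routine point is checking that the construction $g\mapsto\partial_kg$ is well defined on the quotient $\cK_{m-1}/\cK_m$ modulo $\cK_{m-2}/\cK_{m-1}$ — but this is immediate, since if $g\in\cK_m$ then already $(\ref{(*)})$ holds for $|I|\le m$, hence $\partial_kg$ satisfies the defining equations of $\cK_{m-1}$ by the same differentiation argument, so the image in $\Phi_{m-1}(\cK_{m-2}/\cK_{m-1})$ is unchanged. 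No frames or choices of splitting are needed; everything reduces to the identities $(\ref{(*)})$, $(\ref{(**)})$, $(\ref{(***)})$ already established.
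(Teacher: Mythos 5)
Your argument is correct and takes essentially the same route as the paper's proof: restrict to an open set where the ranks of $a^m$ and $a^{m-1}$ are constant, and compute the contraction of $\phi_m(g)$ against $\partial_k$ locally using the Taylor description and the identities (\ref{(**)}) and (\ref{(***)}) from the proof of Theorem \ref{fact}. If anything, your version is the more careful one: by exhibiting $\partial_k g\in(\cK_{m-2})_x$ as an explicit preimage under $\phi_{m-1}$ and checking that the combinatorial factors $(i_k+1)$ arising from the contraction cancel exactly against those in (\ref{(***)}) (leaving only the global scalar $-1$), you make fully explicit a step that the paper's proof compresses into the observation that $\cK_{m-1}\subseteq\cK_{m-2}$.
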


\begin{proof} By restricting to an open subset of $X$, we may assume that
the ranks of $a^m$ and $a^{m-1}$ are constant.  It suffices to show (locally) that $J(\mathcal K) \subseteq \phi_{m-1}(\mathcal K_{m-2})\otimes \mathcal L^{-1}$.
We use the local description of $\phi_m$ given in the proof of Theorem \ref{fact}. Let $g\in (\mathcal K_{m-1})_x$. Then $\phi_m(g)=\sum_{|I|=m} (\sum_{j=0}^N g_j D_I x_j)du^I$. The contraction map sends
$\phi_m(g)\otimes \partial U_k$ to $\sum_{|I|=m-1} (\sum_{j=0}^N g_j D_{I^k} x_j)du^{I^k}$, where $I^k:=(i_1,\dots, i_k-1,\dots,i_r)$. Now $g\in (\mathcal K_{m-2})_x$, since
$\mathcal K_{m-1}\subseteq \mathcal K_{m-2}$, hence $\sum_{|I^k|=m-1} (\sum_{j=0}^N g_j D_{I^k} x_j)du^{I^k}\in \phi_{m-1}(\mathcal K_{m-2})_x$.
\end{proof}

\bigskip

The geometrical interpretation of the fundamental forms gives of
course a finer invariant for the osculating behavior of $X$ than just
the dimensions $s(m)$ of the osculating spaces.  The simplest way to
illustrate this, is to look at the case of surfaces with $s(2)=4$. 
\medskip

\begin{example}\label{Togliatti} (Togliatti's Del Pezzo surface \cite{T}*{pp.~261}, \cite{S}*{Ex.~1, p.~248}.)  This is the (toric)
surface  $X\subset \IP^5$ given by the rational parameterization $f \colon  \IP^2 \dashrightarrow \IP^5$ where
\[f(x,y)= (1:x:y:xy^2:x^2y:x^2y^2).\] 
It is the projection of the Del
Pezzo surface of degree 6 in $\IP^6$ from the common point of its
second order osculating spaces (the construction can be generalized to
higher degrees).  In this case, the second fundamental forms are
pencils with no base point. 

To see this, we first find (local) equations for $X$:
$G_1:=X_3-X_1X_2^2=0$, $G_2:=X_4-X_1^2X_2=0$, $G_3:=X_5-X_1^2X_2^2=0$. We take $u_1=x$ and $u_2=y$.
The map $\phi_2 \colon \mathcal K_1\to S^2\Omega^1_X\otimes \mathcal L$ is then given locally by the matrix product $\overline A^{(2)} \cdot B_1$, where $\overline A^{(2)}$ is the matrix obtained by taking the last three rows of the matrix of the map $a^2$:
\[A^{(2)}=\left(\begin{array}{cccccc}
1&x&y&xy^2&x^2y&x^2y^2\\
0&1&0&y^2&2xy&2xy^2\\
0&0&1&2xy&x^2&2x^2y\\
0&0&0&0&y&y^2\\
0&0&0&2y&2x&4xy\\
0&0&0&x&0&x^2
\end{array}
\right)
\]
and the matrix
\[K_1=\left(\begin{array}{ccc}
2xy^2 & 2x^2y & 3x^2y^2\\
-y^2&-2xy&-2xy^2\\
-2xy & -x^2 & -2x^2y\\
1 & 0 & 0\\
0&1&0\\
0&0&1
\end{array}
\right)
\]
is obtained from the matrix
\[
\left(\begin{array}{ccc}
\partial G_1/\partial X_0&\partial G_2/\partial X_0 & \partial G_3/\partial X_0\\
\partial G_1/\partial X_1&\partial G_2/\partial X_1 & \partial G_3/\partial X_1\\
\vdots & \vdots & \vdots\\
\partial G_1/\partial X_5&\partial G_2/\partial X_5 & \partial G_3/\partial X_5
\end{array}
\right)
\]
by substituting $X_0=1$,  $X_1=x$, $X_2=y$, $X_3=xy^2$, $X_4=x^2y$, $X_5=x^2y^2$.
(Alternatively, one can compute directly the kernel of the matrix $A^{(2)}$.)
Hence the map $\phi_2$ is given by the matrix
\[\overline A^{(2)} \cdot K_1=\left(\begin{array}{ccc}
0&y&y^2\\
2y&2x&4xy\\
x&0&x^2
\end{array}
\right)
\]
Thus the image of $\phi_2$ is generated by $2ydxdy+xdy^2$ and $ydx^2+2xdxdy$. This means that the linear system $|\Phi_2(x_0,y_0)|$ at the point $(1:x_0:y_0:x_0y_0^2:x_0^2y_0:x_0^2y_0^2)$ is equal to $\langle 2y_0v_1v_2+x_0v_2^2, y_0v_1^2+2x_0v_1v_2\rangle$, where $(v_1:v_2)$ are coordinates on $PT(f(x_0,y_0))\cong \mathbb P^1$. These linear systems have no base points.

To find the third fundamental forms, note that the kernel of $A^{(2)}$ is given by the column matrix $K_2:=(-x^2y^2,xy^2,x^2y,-x,-y,1)^T$. Let
\[
\overline A^{(3)}=\left(\begin{array}{cccccc}
0&0&0&0&0&0\\
0&0&0&0&1&2y\\
0&0&0&1&0&2x\\
0&0&0&0&0&0
\end{array}
\right)
\]
denote the matrix obtained by taking the last four rows of the matrix $A^{(3)}$.
Then the map $\Phi_2$ is given by the product $\overline A^{(3)} \cdot K_2=(0,y,x,0)^T$.
Therefore we get  the linear system $\Phi_3(x_0,y_0)=\langle y_0v_1^2v_2+x_0v_1v_2^2\rangle$.
We observe that the partial derivatives of the generator for $\Phi_3(x_0,y_0)$ are the generators for $\Phi_2(x_0,y_0)$, i.e., the Jacobian of $\Phi_3(x_0,y_0)$ is equal to $\Phi_2(x_0,y_0)$.
\end{example}

\begin{example} Any ruled, non-developable surface in $\IP^N$, $N\ge 5$, has
$s(2)=4$, hence the linear systems corresponding to the second
fundamental forms are 1-dimensional.  It is easy to see that they have
a base point, corresponding to the (direction of the) ruling (see Proposition \ref{ruled}).  In
\cite{GH}*{p.~377}, the authors ask whether this only occurs for ruled
surfaces.  However, in their Appendix B, they assert that the answer is no. Indeed,
Shifrin \cite{S}*{p.~248} gave an explicit example of a non-ruled surface 
such that the second fundamental forms have a base point.
A slightly modified version of this surface is given by the 
rational parameterization $f \colon \IP^2 \dashrightarrow \IP^5$, where
\[f(x,y)= (1:x+y^2:y:y^3+3xy:y^4+6xy^2+3x^2:y^5+10xy^3+15x^2y),\]
which satisfies a ``differential heat equation''
\[ \partial^2 f/\partial y^2 = \partial f/\partial x.\]
This surface has $s(2)=4$, it is not ruled, but the second fundamental 
forms have a base point, as was also shown in \cite{Te}*{pp.~49--51}.

In fact, any surface of heat equation type has this property \cite{S}*{Thm.~(2.14), p.~237}. Suppose the surface is given by a parameterization $f(x,y)$, satisfying
\[ \partial^2 f/\partial y^2 = \varphi(x,y)\partial f/\partial x,\]
for some function $\varphi(x,y)$. Let $A^{(1)}$ denote the matrix corresponding to the map $a^1$. Then with the notation of the previous example, $A^{(1)}\cdot K_1=0$, in particular $\partial f/\partial x \cdot K_1=0$. The last row of the matrix $\overline A^{(2)}$ is given by $\partial^2 f /\partial y^2$, so that $\partial^2 f /\partial y^2 \cdot K_1=0$.  Hence each linear system is generated by linear forms in $v_1^2$ and $v_1v_2$ and therefore has a base point $(0:1)$.

In Shifrin's example, local equations for the surface $X$ are 
$G_1:=X_3-3X_1X_2+2X_2^3=0$, $G_2:=X_4+ 2X_2^4-3X_1^2=0$, $G_3:=X_5-6X_2^5+20X_1X_2^3-15X_1^2X_2=0$.
Computations as in Example \ref{Togliatti} then give
\[
\overline A^{(2)}\cdot K_1=
\left(\begin{array}{cccccc}
0&3&15y\\
3&12y&30(x+y^2)\\
0&0&0
\end{array}
\right),
\]
so that $\Phi_2(f(x_0,y_0))=\langle v_1^2,v_1v_2 \rangle$. 

 With notations as in the previous example, set
\[
\overline A^{(3)}=\left(\begin{array}{cccccc}
0&0&0&0&0&0\\
0&0&0&0&0&15\\
0&0&0&0&6&30y\\
0&0&0&1&4y&10x+10y^2
\end{array}
\right)
\]
To compute the product $\overline A^{(3)}\cdot K_2$, we only need to know the three last entries in $K_2$. We find that
$K_2=(*,*,*,-10x+10y^2,-5y,1)^T$, and hence we get $\overline A^{(3)}\cdot K_2=(0,15,0,0)^T$.
Therefore $\Phi_3(f(x_0,y_0))=\langle v_1^2v_2 \rangle$, again confirming that the Jacobian of the third fundamental form is equal to the second fundamental form.
\end{example}

\begin{example} There are also non-rational non-ruled surfaces in
$\IP^N$ such that $s(2)=4$. An example is the surface $\Gamma_2$ provided
by Dye \cite{Dye}*{p.~1}:  Consider the intersection of three
quadrics $\sum_{i=0}^5 b_{i}^jX_{i}^2=0$, $j=0,1,2$, in $\IP^5$, where
the $b_i$ are distinct (and nonzero) elements of the base field.  It
was shown in \cite{Te}*{pp.~53--56}, that in this case the second fundamental
forms are pencils without a base point.  
According to \cite{S}*{Thm.~(2.17), p.~239}, this surface must be of ``wave equation type''.
\end{example}

\section{Ruled varieties}

Let $Y$ be a smooth variety of dimension $n$ and $g \colon Y\to \Grass_{e+1}(V)$ a
morphism, where $V$ is a vector space of dimension $N+1$. Let $V_Y\to {\cE}$ denote the pullback via $g$ of
the tautological $(e+1)$-quotient on $\Grass_{e+1}(V)$. Set $X:= {\IP}({\cE})$, let $ f\colon X\to {\IP}(V)$ denote  the induced morphism and  $\pi
\colon X\to Y$ the projection. Set $\cL :=f^*\mathcal O_{\IP(V)}(1)$. Note that $\pi_*\cL=\cE$ \cite{H}*{Ex.~8.4.(a), p.~253}.

Let $x\in \IP (\cE)$ be a general point and $U\subseteq Y$ an open subset, with $\pi(x)\in U$, such that $\pi^{-1}(U)\cong U\times \IP^e$. Then we can find coordinates such that, around $x\in U\times \mathbb A^e$, the morphism $f$ is parameterized by
\[f(u_1,\dots,u_n,t_1,\dots,t_e)=(1:x_1(\underline u, \underline t):\cdots : x_N(\underline u, \underline t)),\]
where the $x_i$ are linear in the $t_j$. Let again $\overline A^{(m)}$ denote the last $\binom{n+e+m-1}{m-1}$ rows of the matrix defining (locally) the homomorphism $a^m_X\colon V_X\to \cP^m_X(\cL)$.
Since the $x_i$ are linear in the $t_j$, their partial derivatives of order $\ge 2$ with respect to the $t_j$ are $0$. The first column is also $0$, so in order to study the $m$th fundamental forms  we can replace the matrix $\overline A^{(m)}$ by the $(\binom{m+n}n+e\binom{m-1+n}n)\times N$-matrix (we use the same name)
\[
\overline A^{(m)}=\left(\begin{array}{cccc}
\partial^m x_1/\partial u_1^m&\partial^m x_2/\partial u_1^m&\ldots&\partial^m x_N/\partial u_1^m\\
\partial^m x_1/\partial u_1^{m-1}\partial u_2&\partial^m x_2/\partial u_1^{m-1}\partial u_2&\ldots&\partial^m x_N/\partial u_1^{m-1}\partial u_2\\
\vdots & \vdots& \vdots & \vdots\\
\partial^m x_1/\partial u_n^m&\partial^m x_2/\partial u_n^m&\ldots&\partial^m x_N/\partial u_n^m\\
\partial^m x_1/\partial u_1^{m-1}\partial t_1&\partial^m x_2/\partial u_1^{m-1}\partial t_1&\ldots&\partial^m x_N/\partial u_1^{m-1}\partial t_1\\
\vdots & \vdots& \vdots & \vdots\\
\partial^m x_1/\partial u_n^{m-1}\partial t_e&\partial^m x_2/\partial u_n^{m-1}\partial t_e&\ldots&\partial^m x_N/\partial u_n^{m-1}\partial t_e
\end{array}
\right).
\]

\medskip

\begin{proposition}\label{ruled}
Let $f \colon X=\mathbb P(\mathcal E) \to \mathbb P(V)$ be a ruled variety as above. Let $x\in X$ be a general point,  let $L_x:=\pi^{-1}(\pi(x))$ denote the ruling containing $x$, and let $PT_L(x)\subset PT(x)$ denote the projectivized tangent space to $L_x$ at $x$. Then each member of the linear system $|\Phi_m(x)|$, for $m\ge 2$, contains 
$PT_L(x)$. In particular, if $n=1$, $PT_L(x)$ is a fixed component of each member, and if $n\ge 2$ and $m\ge 3$, then each member is singular along $PT_L(x)$.
\end{proposition}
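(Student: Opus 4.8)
The plan is to work entirely in the local coordinates set up before the proposition, using the matrix $\overline A^{(m)}$, and to show that the linear forms $v_{n+1},\dots,v_{n+e}$ (the coordinates on $PT(x)$ corresponding to the ruling directions $\partial/\partial t_1,\dots,\partial/\partial t_e$) divide every member of $|\Phi_m(x)|$. Recall from Proposition \ref{3} that a member of $|\Phi_m(x)|$ is, up to the twist by $\cL$, an element $\phi_m(g)=\sum_{|I|=m}(\sum_j g_j D_I x_j)\,du^I$ for $g=\sum_j g_j\otimes X_j\in(\cK_{m-1})_x$; here $I=(i_1,\dots,i_n,i_{n+1},\dots,i_{n+e})$ ranges over multi-indices in the $n+e$ variables $u_1,\dots,u_n,t_1,\dots,t_e$. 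The key structural fact, already exploited in the displayed form of $\overline A^{(m)}$, is that each $x_j$ is \emph{linear} in the $t$'s, so $D_I x_j = 0$ whenever $|I|$ involves second or higher derivatives in the $t$-directions, i.e.\ whenever $i_{n+1}+\cdots+i_{n+e}\ge 2$.

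First I would combine this vanishing with the defining condition $g\in\cK_{m-1}$. By equation \eqref{(*)} in the proof of Theorem \ref{fact}, $\sum_j g_j D_I x_j=0$ for \emph{all} $I$ with $|I|\le m-1$. Now take any multi-index $J$ with $|J|=m$ and $j_{n+1}+\cdots+j_{n+e}=1$ — say $j_{n+\ell}=1$ and all other $t$-entries zero. Write $\partial_{t_\ell}(\sum_j g_j D_{J'} x_j)=0$ for the multi-index $J'$ obtained from $J$ by deleting the single $t_\ell$-derivative (so $|J'|=m-1$); expanding by the product rule and using that $\partial_{t_\ell}D_{J'}x_j$ is, up to the combinatorial factor, $D_{J}x_j$ while $\partial_{t_\ell}g_j$ need not vanish, I get $\sum_j g_j D_J x_j = -\,(\text{const})\sum_j (\partial_{t_\ell} g_j) D_{J'} x_j$. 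The point is that this already shows the coefficient of $du^J$ is controlled, but to kill it outright I instead differentiate \eqref{(*)} with respect to $t_\ell$ for $|I|\le m-2$, exactly as \eqref{(**)} is derived, to conclude $\sum_j(\partial_{t_\ell}g_j)D_Ix_j=0$ for $|I|\le m-2$ as well; iterating, the coefficient $\sum_j g_j D_J x_j$ of each $du^J$ with exactly one $t$-derivative also vanishes. Combined with the linearity vanishing for $\ge 2$ $t$-derivatives, this shows $\phi_m(g)$ involves only those $du^J$ with $j_{n+1}=\cdots=j_{n+e}=0$, i.e.\ $\phi_m(g)$ is a form in $du_1,\dots,du_n$ only. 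In the $v$-coordinates on $PT(x)$ this says precisely that every member of $|\Phi_m(x)|$ lies in the subring generated by $v_1,\dots,v_n$, hence vanishes on the linear subspace $PT_L(x)=\{v_1=\cdots=v_n=0\}$; a degree-$m$ form vanishing on this codimension-$n$ linear space is divisible by each $v_{n+\ell}$, i.e.\ lies in the ideal of $PT_L(x)$, which gives containment of $PT_L(x)$ in every member.

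From this the remaining assertions are immediate. If $n=1$ then $PT_L(x)$ is a hyperplane in $PT(x)\cong\IP^{e}$, cut out by the single linear form $v_1$; a member of $|\Phi_m(x)|$ is a degree-$m$ form vanishing on $\{v_1=0\}$, hence divisible by $v_1$, so $PT_L(x)$ is a fixed component of multiplicity $\ge 1$ of each member. If $n\ge 2$ then $PT_L(x)$ has codimension $n\ge 2$, so the member lies in the $n$-th power? — more precisely, it lies in the ideal $(v_1,\dots,v_n)$ of $PT_L(x)$; to get singularity along $PT_L(x)$ when $m\ge 3$ I would argue that the form, being a member of $|\Phi_m(x)|$, actually lies in $(v_1,\dots,v_n)^2$: this follows from the Jacobian statement (Theorem \ref{jac}), since all first partials of a member of $|\Phi_m(x)|$ lie in $|\Phi_{m-1}(x)|$, and by the same argument $|\Phi_{m-1}(x)|$ (valid as soon as $m-1\ge 2$, i.e.\ $m\ge 3$) consists of forms in the ideal $(v_1,\dots,v_n)$; hence every first partial of a member of $|\Phi_m(x)|$ vanishes on $PT_L(x)$, which is exactly the statement that the member is singular along $PT_L(x)$.

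The main obstacle is the bookkeeping of Hasse derivatives in the ruled setting: one must handle multi-indices in the $n+e$ variables $u$ and $t$ and track carefully which combinatorial constants $(i_k+1)$ appear, exactly as in \eqref{(***)}, while ensuring the argument deriving \eqref{(**)} goes through when the differentiation is in a $t$-direction — it does, since the derivation of \eqref{(**)}–\eqref{(***)} in the proof of Theorem \ref{fact} is symmetric in all the local parameters and makes no distinction between $u$'s and $t$'s. The one genuinely new input, which is what makes the $t$-directions special, is the linearity of the $x_j$ in the $t_j$; everything else is a specialization of the computations already carried out for Theorem \ref{fact} and Proposition \ref{4}. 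A minor subtlety worth a sentence is that one should choose $x$ general enough that $f$ is an immersion near $x$ and the ranks of $a^m$ and $a^{m-1}$ are constant, so that $PT(x)$ and the linear system $|\Phi_m(x)|$ are defined as in Proposition \ref{3}; this is harmless since all conditions are open and $X$ is irreducible.
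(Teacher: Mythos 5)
Your overall setup matches the paper's: work in the local coordinates $(u_1,\dots,u_n,t_1,\dots,t_e)$, use that the $x_j$ are linear in the $t$'s so that $D_Ix_j=0$ whenever $I$ involves two or more $t$-derivatives, and read off which monomials $du^I$ can occur in $\phi_m(g)$. But the central claim of your second paragraph --- that the coefficients of the monomials $du^J$ with \emph{exactly one} $t$-derivative also vanish, so that $\phi_m(g)$ is a form in $du_1,\dots,du_n$ alone --- is false, and the derivation breaks at a specific point. You reduce the coefficient of such a $du^J$ to $-\,C\sum_j(\partial_{t_\ell}g_j)D_{J'}x_j$ with $|J'|=m-1$ and then invoke the analogue of \eqref{(**)} to kill it; but \eqref{(**)} only gives $\sum_j(\partial_{t_\ell}g_j)D_Ix_j=0$ for $|I|\le m-2$, whereas here $|J'|=m-1$, which is exactly the range where \eqref{(***)} applies and the quantity is in general nonzero. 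Indeed your displayed relation is just \eqref{(***)} restated, so it carries no new information and there is nothing to ``iterate.'' The paper's own rational normal scroll example shows the claim cannot be repaired: there $|\Phi_m(x)|$ is generated by $v^m,v^{m-1}w_1,\dots,v^{m-1}w_e$, so members genuinely involve the ruling directions $w_j$ to first order.

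Fortunately the proposition does not need the stronger claim. The observation you already make in your first paragraph --- only multi-indices with at most one $t$-derivative survive --- is all the paper uses: every monomial occurring in a member of $|\Phi_m(x)|$ has degree at least $m-1$ in $v_1,\dots,v_n$, hence for $m\ge 2$ lies in the ideal $(v_1,\dots,v_n)$ of $PT_L(x)$, giving containment; for $m\ge 3$ it lies in $(v_1,\dots,v_n)^2$, giving singularity along $PT_L(x)$ (the paper phrases this as ``taking partial derivatives''); and for $n=1$ the hyperplane $\{v_1=0\}$ is a fixed component. Your alternative route to the singularity statement via Theorem \ref{jac} (all first partials lie in $|\Phi_{m-1}(x)|$, which by the case $m-1\ge 2$ vanishes on $PT_L(x)$) is correct and a pleasant variant, but it too only needs the containment statement, not your stronger vanishing. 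Finally, a small slip at the end of your second paragraph: a form vanishing on $\{v_1=\cdots=v_n=0\}$ lies in the ideal $(v_1,\dots,v_n)$; it is not ``divisible by each $v_{n+\ell}$.''
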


\begin{proof}
Let $v_1,\dots,v_n,w_1,\dots,w_e$ denote homogeneous coordinates on the tangent space $PT(x)\cong \mathbb P^{n+e-1}$, where $v_i$ corresponds to $du_i$ and $w_j$ to $dt_j$. The subspace $PT_L(x)\subset PT(x)$ is defined by $v_1=\cdots =v_n=0$. It follows from the shape of
$\overline A^{(m)}$  that   $|\Phi_m(x)|$ consists of hypersurfaces defined by some linear combination of the monomials 
\[v_1^m, v_1^{m-1}v_2, \dots, v_n^m,v_1^{m-1}w_1,\dots,v_1^{m-1}w_e,v_1^{m-2}v_2w_1,\dots, v_n^{m-1}w_e.\]
The first statement follows from this. So  does the third, by taking partial derivatives. If $n=1$, then $PT_L(x)$ has dimension $n+e-1-1=e-1$, hence is a hyperplane in $PT(x)$.
\end{proof}

\begin{corollary}
\[\dim |\Phi_m(x)| \le \binom{n+m-1}{m}+e\binom{n+m-2}{m-1}-1.\]
\end{corollary}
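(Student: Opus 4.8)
\emph{Proof sketch.} The plan is to read the bound off directly from the description of the linear system $|\Phi_m(x)|$ produced in the proof of Proposition \ref{ruled}. First I would recall, via Proposition \ref{3}, that $|\Phi_m(x)|$ is the projectivization of the image of the (generically injective) map
\[\Phi_m(x)\colon \cK_{m-1}(x)/\cK_m(x)\longrightarrow (S^m\Omega^1_X\otimes\cL)(x)\cong H^0(PT(x),{\cO}_{PT(x)}(m)),\]
so that $\dim|\Phi_m(x)|=\rank\Phi_m(x)-1$; thus it is enough to bound $\rank\Phi_m(x)$, i.e.\ the dimension of the image of $\Phi_m(x)$.

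Next I would invoke the local description of $\phi_m$ from the proof of Theorem \ref{fact}: for $g\in(\cK_{m-1})_x$ one has $\phi_m(g)=\sum_{|I|=m}\bigl(\sum_j g_j\,D_I x_j\bigr)du^I$. In the ruled situation the coordinates $x_j$ are linear in the fibre parameters $t_1,\dots,t_e$, so $D_I x_j=0$ whenever the multi-index $I$ has $\underline t$-degree at least $2$ — precisely the vanishing already exploited in the proof of Proposition \ref{ruled}. Consequently the image of $\Phi_m(x)$ lies in the subspace of $H^0(PT(x),{\cO}_{PT(x)}(m))$ spanned by the monomials $v^I$ with $|I|=m$ supported on $v_1,\dots,v_n$, together with the monomials $v^J w_k$ with $|J|=m-1$ and $1\le k\le e$, where $v_i\leftrightarrow du_i$ and $w_k\leftrightarrow dt_k$ are the coordinates on $PT(x)$.

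Finally I would count these monomials: there are $\binom{n+m-1}{m}$ monomials of degree $m$ in the $n$ variables $v_1,\dots,v_n$, and for each of the $e$ variables $w_k$ there are $\binom{n+m-2}{m-1}$ monomials $v^J w_k$ with $v^J$ of degree $m-1$, giving $e\binom{n+m-2}{m-1}$ more. Hence $\rank\Phi_m(x)\le\binom{n+m-1}{m}+e\binom{n+m-2}{m-1}$, and subtracting $1$ yields the claim. I expect no genuine obstacle: all the geometric content sits in Proposition \ref{ruled}, and what remains is the elementary enumeration of these two families of monomials together with keeping straight the projective shift by $1$ furnished by Proposition \ref{3}.
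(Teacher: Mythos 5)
Your proposal is correct and follows exactly the route the paper intends: the corollary is an immediate count of the monomials $v^I$ (degree $m$ in $v_1,\dots,v_n$) and $v^Jw_k$ (degree $m-1$ in the $v_i$ times one $w_k$) that span the ambient space of $|\Phi_m(x)|$ as identified in the proof of Proposition \ref{ruled}, namely $\binom{n+m-1}{m}+e\binom{n+m-2}{m-1}$ of them, minus $1$ for projectivization. The paper leaves this deduction implicit, and your write-up supplies it accurately.
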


\medskip

One can ask for a ``converse'' statement to Proposition \ref{ruled}, namely how can one characterize ruled varieties given their fundamental forms.
Here we shall just consider the case of surfaces. To show that a surface in $\IP^N$, $N\ge 5$, is ruled, it is necessary that the second fundamental forms are pencils with a base point. However, we have seen that this is not sufficient. 

To show that a surface $X\subset \IP^N$ is ruled, it suffices to show that a general projection $\overline X\subset \IP^3$ is ruled. So we may assume $X\subset \IP^3$. Consider a general point $x\in X$. We may choose coordinates such that $x=(1:0:0:0)$ and $X\cap \mathbb A^3$ has Monge form $(x_1,x_2,f(x_1,x_2))$, where $f(x_1,x_2)= f_2(x_1,x_2)+f_3(x_1,x_2)+f_4(x_1,x_2)+ \cdots$, with $f_2(x_1,x_2)= x_1x_2$. Then the tangent plane to $X$ at $x$ is the plane $x_3=0$, and  $x_1=0$ and $x_2=0$ are the principal tangents. Write $f_3(x_1,x_2)=ax_1^3+bx_1^2x_2+cx_1x_2^2+dx_2^3$. 
The intersection of the zero loci $f_2=0$ and $f_3=0$ in $PT(x)$ is given by $x_1x_2=ax_1^3+dx_2^3=0$, and hence is empty unless $a=0$ or $d=0$.
Say $a=0$, then the principal tangent $x_2=x_3=0$ intersects the surface $X$ in the scheme $k[x_1,x_2,x_3]/(x_3-f,x_2,x_3)=k[x_1]/f(x_1,0)$. Now 
$f(x_1,0)=f_4(x_1,0)+\cdots$, so that the tangent line intersects the surface with multiplicity at least $4$. By \cite{LandsbergLinear}*{Thm.~1, p.~55}, it follows that the tangent is contained in $X$. So if this happens at (almost) all points, then $X$ is ruled.  The form $f_2$ is the second fundamental form at $x$. The form $f_3$ is called the \emph{Fubini cubic form} in \cite{GH}*{pp.~448--449} and it is studied and generalized by Ivey and Landsberg in \cite{IveyL}*{pp.~356--357}. 
It would be interesting to define this cubic form in terms of bundle maps and diagrams as we have done with the second fundamental form. 

We have shown that a sufficient condition for a surface in $\IP^3$ to be ruled is that the intersection of the second fundamental form and the Fubini cubic in the projectivized tangent space $PT(x)$ is non-empty, for almost all points $x\in X$. See also the discussion in \cite{GH}*{pp.~448--449} and in \cite{S}*{pp.~235--236}.

\medskip

Let $\mathcal P^m_Y({\mathcal E})$ denote the sheaf of principal parts of order $m$ of ${\mathcal
E}$, which is a bundle of rank $\binom{n+m}m(e+1)$.  Set $a^m_Y(\cE) \colon V_Y\to \mathcal P^m_Y(\mathcal E)$ equal to the natural homomorphism obtained by composing 
\[V_Y=H^0(\mathbb P(V), \mathcal O_{\mathbb P(V)}(1))_Y\to H^0(X,\mathcal L)_Y\]
 with $H^0(X,\mathcal L)_Y =H^0(Y,\pi_*\mathcal L)_Y=H^0(Y,\mathcal E)_Y\to \mathcal P^m_Y(\mathcal E)$ \cite{numchar}*{§ 6, p.~492}. Note that 
$a^m_X\colon V_X \to  \mathcal P^m_X(\mathcal L)$ is the composition  $\pi^*V_Y\to \pi^*\mathcal E=\pi^*\pi_*\mathcal L\to \mathcal P^m_X(\mathcal L)$. 

\begin{proposition}\label{pushdown}
For each integer $m\geq 1$, we have a natural isomorphism of exact sequences
\[
\CD
0	@>>>	{S^m\Omega ^1_Y\otimes {\cE}}	@>>>	{\cP^m_Y(\cE)}	@>>>	{\cP^{m-1}_Y({\cE})} @>>> 0	 \\ @.	@V{\alpha _m}V\simeq V	@V{\beta_m}V\simeq V
@V{\beta_{m-1}}V\simeq V @. \\ 
0 @>>> \pi_*({S^m\Omega ^1_X\otimes \cL)}	@>>>
\pi_*\cP^m_X(\cL)	@>>> \pi_*\cP^{m-1}_X(\cL) @>>> 0 \\ 
\endCD \]
and the diagram
\[
\CD
V_Y @>a_Y^m(\cE)>> \cP^m_Y(\cE) \\ @| @V{\beta_m}V\simeq V \\ V_Y 
@>{\pi_*a^m_X}>> \pi_*\cP^m_X(\cL) \\ 
\endCD \]
commutes.
\end{proposition}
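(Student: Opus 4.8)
The plan is to establish the first assertion by induction on $m$, building the vertical maps from the standard functoriality of sheaves of principal parts and closing the induction with a diagram chase; the second assertion then follows from the functoriality of the universal Taylor operators. I would first construct all the vertical arrows. The natural transformation on principal parts attached to $\pi$ (\cite{EGA}*{16.4}) gives $\pi^*\cP^m_Y(\cE)\to\cP^m_X(\pi^*\cE)$, and the tautological surjection $\pi^*\cE\to\cL$ induces $\cP^m_X(\pi^*\cE)\to\cP^m_X(\cL)$; write $b_m$ for the composite and let $\beta_m\colon\cP^m_Y(\cE)\to\pi_*\cP^m_X(\cL)$ be its adjoint. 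Both constituents of $b_m$ commute with the truncations $\cP^m\to\cP^{m-1}$, so $b_m$ respects the order filtrations and restricts on the top graded pieces to $S^m(\pi^*\Omega^1_Y\to\Omega^1_X)\otimes(\pi^*\cE\to\cL)\colon\pi^*(S^m\Omega^1_Y\otimes\cE)\to S^m\Omega^1_X\otimes\cL$; I define $\alpha_m$ to be the adjoint of this restriction. Then both squares of the asserted diagram commute by construction, the maps are natural, and the base case $m=0$ is the fact that $\beta_0$ is the canonical isomorphism $\cE\xrightarrow{\sim}\pi_*\cL$ adjoint to the tautological quotient $\pi^*\cE\to\cL$.

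Next I would show $\alpha_m$ is an isomorphism, which may be checked locally on $Y$; so assume $\cE\cong W\otimes_k\cO_Y$ for an $(e+1)$-dimensional $k$-vector space $W$, and $X=Y\times_k\IP^e$ with $\cL=\pr_2^*\cO_{\IP^e}(1)$. The relative cotangent sequence $0\to\pi^*\Omega^1_Y\to\Omega^1_X\to\Omega^1_{X/Y}\to0$ is then split, so $S^m\Omega^1_X\otimes\cL\cong\bigoplus_{a=0}^m\pr_1^*(S^a\Omega^1_Y)\otimes\pr_2^*\bigl(S^{m-a}\Omega^1_{\IP^e}(1)\bigr)$. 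Pushing forward along $\pi=\pr_1$ and using the vanishing $H^0\bigl(\IP^e,S^j\Omega^1_{\IP^e}(1)\bigr)=0$ for $j\ge1$, only the summand $a=m$ survives and yields $S^m\Omega^1_Y\otimes H^0(\IP^e,\cO(1))\cong S^m\Omega^1_Y\otimes\cE$; tracking the identifications shows the isomorphism obtained is inverse to $\alpha_m$. The vanishing follows from Bott's formula, or directly: taking $S^j$ of the twisted relative Euler sequence $0\to\Omega^1_{\IP^e}(1)\to W\otimes\cO\to\cO(1)\to0$ realizes $S^j\Omega^1_{\IP^e}(1)$ as a subsheaf of $S^jW\otimes\cO(1-j)$, which has no global sections for $j\ge2$, while for $j=1$ the induced map $W=H^0(W\otimes\cO)\to H^0(\cO(1))$ is an isomorphism.

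To close the induction, assume $\beta_{m-1}$ is an isomorphism. By left exactness of $\pi_*$ the sheaf $\pi_*(S^m\Omega^1_X\otimes\cL)$ is the kernel of $\pi_*\cP^m_X(\cL)\to\pi_*\cP^{m-1}_X(\cL)$, and it equals the image of $\alpha_m$. A short diagram chase in the two rows then shows $\beta_m$ is injective (from injectivity of $\alpha_m$ and of $\beta_{m-1}$) and surjective (lift a local section of $\pi_*\cP^m_X(\cL)$ through $\beta_{m-1}$ modulo the kernel, then correct by an element in the image of $\alpha_m$); hence $\beta_m$ is an isomorphism, and in particular $\pi_*\cP^m_X(\cL)\to\pi_*\cP^{m-1}_X(\cL)$ is surjective, so the bottom row is short exact. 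For the lower diagram, adjunction turns the identity $\beta_m\circ a^m_Y(\cE)=\pi_*a^m_X$ into $b_m\circ\pi^*\bigl(a^m_Y(\cE)\bigr)=a^m_X$ as maps $V_X=\pi^*V_Y\to\cP^m_X(\cL)$; both sides carry the section attached to $v\in V$, viewed through $H^0(Y,\cE)=H^0(X,\cL)$, to the order-$m$ Taylor expansion of the corresponding section of $\cL$, so the equality is an instance of the compatibility of the universal differential operators with pullback along $\pi$ and with the sheaf homomorphism $\pi^*\cE\to\cL$ (see \cite{numchar}*{§ 6}).

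The main obstacle is the isomorphism statement for $\alpha_m$—the fibrewise cohomology vanishing above—together with the point that $R^1\pi_*(S^m\Omega^1_X\otimes\cL)$ need not vanish, so that short exactness of the bottom row (and with it the proposition) is not obtained by pushing a short exact sequence forward, but must be extracted from the diagram chase.
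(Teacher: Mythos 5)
Your proof is correct and follows the same overall architecture as the paper's: define $\beta_m$ as the adjoint of $\pi^*\cP^m_Y(\cE)\to\cP^m_X(\pi^*\cE)\to\cP^m_X(\cL)$, let $\alpha_m$ be the induced map on the kernels of the truncations, and reduce everything to the vanishing $H^0(\IP^e,S^j\Omega^1_{\IP^e}\otimes\cO_{\IP^e}(1))=0$ for $1\le j\le m$. The differences are in how that reduction and vanishing are carried out. The paper works globally: it uses the filtration $F^\bullet$ of $S^m\Omega^1_X$ coming from $0\to\pi^*\Omega^1_Y\to\Omega^1_X\to\Omega^1_{X/Y}\to 0$, with graded pieces $S^j\pi^*\Omega^1_Y\otimes S^{m-j}\Omega^1_{X/Y}$, shows $\pi_*(F^j/F^{j+1}\otimes\cL)=0$ for $j<m$ by invoking Grauert's theorem to compute the pushforward fibrewise and Bott's theorem for the fibre cohomology. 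You instead trivialize $\cE$ locally on $Y$ so that the cotangent sequence splits and the filtration becomes a direct sum, turning the pushforward into a K\"unneth-type computation; this avoids Grauert entirely and lets you offer an elementary substitute for Bott via $S^j$ of the twisted Euler sequence, which embeds $S^j\Omega^1_{\IP^e}(1)$ into $S^jW\otimes\cO_{\IP^e}(1-j)$. You also make explicit two steps the paper leaves implicit: the induction on $m$ with the diagram chase showing that isomorphy of $\alpha_m$ and $\beta_{m-1}$ forces $\beta_m$ to be an isomorphism and the bottom row to be exact (correctly noting this is not automatic from pushing forward the top row), and the verification via adjunction that the second diagram commutes. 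Both routes are sound; yours is more self-contained and localizes the essential content, while the paper's is shorter because it delegates the fibrewise computation to the two cited theorems.
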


\begin{proof} Define $\beta_m$ as the adjoint of the composition of the
natural maps 
\[
\pi^*\cP^m_Y(\cE)\to \cP^m_X(\pi^*\cE)=\cP ^m_X(\pi^*\pi_*\cL)\to \cP^m_X(\cL).\]
Thus the map $\alpha_m$ making the diagram commute is defined as the adjoint
of 
\[\pi^*(S^m\Omega^1_Y\otimes{\cE})=S^m\pi^*\Omega^1_Y\otimes 
\pi^*\pi_*\cL \to S^m\Omega^1_X\otimes \cL.
\]
We want to show that the $\alpha_m$'s are isomorphisms.
\medskip

Consider the exact sequence 
\[ 0\to \pi^*\Omega^1_Y\to
\Omega^1_X\to \Omega^1_{X/Y}\to 0,\] 
which gives
\[0\to S^m\pi^*\Omega^1_Y\to
S^m\Omega^1_X\to G_m:=S^m\Omega^1_X/S^m\pi^*\Omega^1_Y\to 0.\]
Then $S^m\Omega^1_X$ has a filtration \cite{H}*{II, Ex.~5.16 (c)}
\[S^m\Omega^1_X = F^0\supseteq F^1\supseteq ...\supseteq F^m\supseteq
F^{m+1}=0\]
 such that 
 \[F^j/F^{j+1}\cong S^j\pi^*\Omega^1_Y\otimes
S^{m-j}\Omega^1_{X/Y}.\] 
Now consider the exact sequences
\[0\to S^m\pi^*\Omega^1_Y\otimes \cL\to
S^m\Omega^1_X\otimes \cL\to G_m\otimes \cL
\to 0\]
and the diagram
\[ \CD
0 @>>> \pi_*(S^m\pi^*\Omega^1_Y\otimes \cL) @>>>    \pi_*
(S^m\Omega^1_X\otimes \cL) \\
@. @| @| @. \\ 
@. S^m\Omega^1_Y\otimes {\cE} @>\alpha_m>> 
\pi_* (S^m\Omega^1_X\otimes \cL) @. \\ 
@. @| @| @. \\ 
@. \pi_*(F^m\otimes \cL) @>>> \pi_*(F^0\otimes \cL) @. \\
\endCD \]
\medskip

In order to show that $\alpha_m$ is an isomorphism, it suffices to show that
the maps 
\[\pi_*(F^{j+1}\otimes \cL)\to \pi_*(F^j\otimes 
\cL)\]
are isomorphisms for $j=0,1,...,m-1$. But this will follow if we can show that
\[\pi_*(F^j/F^{j+1}\otimes \cL)=0\]
for $j=0,1,...,m-1$. Now we have
\begin{align*}
\pi_*(F^j/F^{j+1}\otimes \cL) 
&=\pi_*(S^j\pi^*\Omega^1_Y\otimes S^{m-j}\Omega^1_{X/Y}\otimes \cL)\\
&= S^j\Omega^1_Y\otimes \pi_*(S^{m-j}\Omega^1_{X/Y}\otimes \cL).
\end{align*}
It therefore suffices to show 
\[\pi_*(S^{m-j}\Omega^1_{X/Y}\otimes \cL) =0,\]
 for $j=0,1,...,m-1$.
 Consider the base change map
\[\pi_*(S^{m-j}\Omega^1_{X/Y}\otimes \cL)\otimes k(y) \to
H^0(\pi^{-1}(y),S^{m-j}\Omega^1_{\pi^{-1}(y)}\otimes \cL_y).\]
Since $\pi \colon X\to Y$ is a projective fiber bundle, with $\pi^{-1}(y)\cong \mathbb P^e$, the right hand side has constant dimension for $y\in Y$.
By Grauert's theorem \cite{H}*{Cor.~12.9, p.~288}, it follows that $\pi_*(S^{m-j}\Omega^1_{X/Y}\otimes \cL) $ is locally free and that the base change map is an isomorphism. Hence it suffices to show that
$H^0(\IP^e,S^i\Omega^1_{\IP^e}\otimes \cO_{\IP^e}(1))=0$ for $i=1,\ldots, m$. But this holds by Bott's theorem \cite{Bo}*{Prop.~14.4, p.~246}.
\end{proof}

\medskip

We can also define fundamental forms for varieties in Grassmann varieties.  Set $\cK_{m}^\cE:=\Ker a^m_Y(\cE)$. We get a map
$\phi_m(\cE)\colon \cK_{m-1}^\cE \to S^m\Omega^1_Y\otimes \cE$, which induces
\[\Phi_m(\cE)\colon  \cK_{m-1}^\cE/ \cK_{m}^\cE \to S^m\Omega^1_Y\otimes \cE .\]

\begin{corollary} With notations as above, we have
\[\pi_*\Phi_m=\Phi_m(\cE).\]
\end{corollary}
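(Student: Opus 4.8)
The goal is to show the equality of maps $\pi_*\Phi_m = \Phi_m(\cE)$, where $\Phi_m$ is the $m$th fundamental form of $X = \IP(\cE)$ with respect to $f\colon X\to\IP(V)$, and $\Phi_m(\cE)$ is the analogous construction on $Y$ built from $a^m_Y(\cE)$. The natural plan is to compare the two defining commutative diagrams via the isomorphisms $\beta_m$ and $\alpha_m$ furnished by Proposition \ref{pushdown}.

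First I would apply $\pi_*$ to the exact sequence $0\to S^m\Omega^1_X\otimes\cL\to\cP^m_X(\cL)\to\cP^{m-1}_X(\cL)\to 0$. By Proposition \ref{pushdown} this is exact on the left (the connecting map $R^1\pi_*$ of the kernel vanishes, since $\alpha_m$ is an isomorphism onto $\pi_*(S^m\Omega^1_X\otimes\cL)$ and the whole pushed-down sequence is exact), so $\pi_*$ is exact on the relevant terms. Next I would push down the defining diagram for $\phi_m$:
\[
\begin{CD}
0 @>>> \cK_{m-1} @>>> V_X @>a^{m-1}>> \cP^{m-1}_X(\cL)\\
@. @VVV @VVV @|\\
0 @>>> S^m\Omega^1_X\otimes\cL @>>> \cP^m_X(\cL) @>>> \cP^{m-1}_X(\cL) @>>> 0.
\end{CD}
\]
Applying $\pi_*$ and using $\pi_*V_X = V_Y$, the commuting square in Proposition \ref{pushdown} identifying $\pi_* a^m_X$ with $\beta_m\circ a^m_Y(\cE)$, and the isomorphisms $\beta_{m-1}$, $\beta_m$, $\alpha_m$, the pushed-down diagram becomes, after transporting along the $\beta$'s and $\alpha$'s, exactly the diagram on $Y$ defining $\phi_m(\cE)$ from $a^m_Y(\cE)$. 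A small point to check is that $\pi_*\cK_{m-1} = \cK^{\cE}_{m-1}$: since $\cK_{m-1} = \Ker(a^{m-1}_X)$ and pushforward is left exact, $\pi_*\cK_{m-1} = \Ker(\pi_* a^{m-1}_X) = \Ker(\beta_{m-1}\circ a^{m-1}_Y(\cE)) = \Ker(a^{m-1}_Y(\cE)) = \cK^{\cE}_{m-1}$, the middle equality using that $\beta_{m-1}$ is an isomorphism.

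From this identification it follows that $\pi_*\phi_m$, viewed as a map $\cK^{\cE}_{m-1}\to S^m\Omega^1_Y\otimes\cE$ via $\alpha_m^{-1}$, equals $\phi_m(\cE)$. To pass from $\phi_m$ to the fundamental form $\Phi_m$, note that $\Phi_m$ is the map induced by $\phi_m$ on $\cK_{m-1}/\cK_m$, equivalently $\phi_m$ corestricted through its image and with source the cokernel of $\cK_m\hookrightarrow\cK_{m-1}$; since $\cK_m = \Ker\phi_m$ and the inclusion $\cK_m\subseteq\cK_{m-1}$ pushes down to $\cK^{\cE}_m\subseteq\cK^{\cE}_{m-1}$ (same left-exactness argument applied to $a^m_X$), applying $\pi_*$ to $0\to\cK_m\to\cK_{m-1}\xrightarrow{\Phi_m} S^m\Omega^1_X\otimes\cL$ and using the $\alpha_m$, $\beta$ identifications gives precisely $0\to\cK^{\cE}_m\to\cK^{\cE}_{m-1}\xrightarrow{\Phi_m(\cE)} S^m\Omega^1_Y\otimes\cE$. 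Hence $\pi_*\Phi_m = \Phi_m(\cE)$.

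The main obstacle is bookkeeping rather than conceptual: one must verify that all the squares in Proposition \ref{pushdown} are compatible with the inclusions $S^m\Omega^1\otimes\cL\hookrightarrow\cP^m$ and with the Taylor maps $a^m$ simultaneously, so that the whole cube (and not just its faces) commutes, and that left-exactness of $\pi_*$ genuinely applies at each stage — this is where the vanishing $\pi_*(S^{m-j}\Omega^1_{X/Y}\otimes\cL)=0$ from the proof of Proposition \ref{pushdown} is used implicitly, to guarantee no $R^1\pi_*$ terms intervene. Once the cube is seen to commute, the statement is a formal diagram chase.
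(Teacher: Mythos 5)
Your argument is correct and follows exactly the route the paper intends: the paper's proof is the single sentence that the corollary is an immediate consequence of Proposition \ref{pushdown}, and your diagram chase (pushing down the defining diagram for $\phi_m$, identifying $\pi_*\cK_{m-1}$ with $\cK^{\cE}_{m-1}$ by left-exactness and the commuting square $\pi_*a^m_X=\beta_m\circ a^m_Y(\cE)$, then transporting along $\alpha_m$, $\beta_m$, $\beta_{m-1}$) is precisely the bookkeeping that sentence leaves implicit. Nothing is missing.
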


\begin{proof}
This is an immediate consequence of Proposition \ref{pushdown}.
\end{proof}

\medskip

\begin{example} (Rational normal scrolls \cite{PS}.)
Assume $Y=\IP^1$ and $\cE=\bigoplus_{i=0}^e \cO_{\IP^1}(d_i)$, with $0<d_0 \le \cdots \le d_e$. Then $X=\IP(\cE)\subset \IP(V)$, where $V=\bigoplus_{i=0}^e H^0(\IP^1,\cO_{\IP^1}(d_i))$ has dimension $\sum_{i=0}^e(d_i+1)$, is a rational normal scroll
of degree $d:=\sum_{i=0}^e d_i$.

Assume $2\le m\le d_0$. We have $\rank a_X^m=m(e+1)+1$.
The rank of $\cP^m_Y(\cE)$ is $\binom{1+m}m(e+1)=(m+1)(e+1)$, and this is also the rank of $a_{\IP^1}^m(\cE)$. Indeed, we have 
\[\cP^m_Y(\cE)=\cP_{\IP^1}^m(\bigoplus_{i=0}^e \cO_{\IP^1}(d_i))=\bigoplus_{i=0}^e \cP_{\IP^1}^m(d_i)=\bigoplus_{i=0}^e \cO_{\IP^1}(d_i-m)^{m+1},\]
and $a_{\IP^1}^m(\cE)=\bigoplus_{i=0}^e a_{\IP^1}^m$, where each $a_{\IP^1}^m$ has rank $m+1$.

We can parameterize an open subset of $X$  ($X$ is a toric variety) by the map $(\C^*)^{1+e} \to \IP(V)$, given by
\[(t,s_1,\dots,s_e)\mapsto (1:t:\cdots :t^{d_0}:s_1:s_1t:\cdots:s_1t^{d_1}:\cdots:s_e:s_et:\cdots :\cdots:s_et^{d_e}).\]
This gives
\[
\overline A^{(m)}=\left(\begin{array}{cccc}
M_{d_0}^m&s_1 M_{d_1}^m&\ldots&s_eM_{d_e}^m\\
0&M_{d_1}^{m-1}&\ldots&0\\
\vdots & \vdots& \vdots & \vdots\\
0&0&\ldots&M_{d_e}^{m-1}\end{array}
\right),
\]
where $M_{d_i}^m$ denotes the $1\times (d_i+1)$-matrix $(0,\dots,0,1,\binom{m+1}mt,\dots,\binom{d_i}m t^{d_i-m})$.
From this one can deduce, with the notations from the proof of Proposition \ref{ruled} that the linear system $|\Phi_m(x)|$ is generated by $v^m,v^{m-1}w_1,\dots,v^{m-1}w_e$, and hence that its dimension is $e$ and its fixed component is given by $v=0$.

It is not quite clear how to give a geometric interpretation of the fundamental forms for a variety in a Grassmann variety. In the case of a rational normal scroll, we have
\[\phi_m(\cE)\colon \cK^\cE_{m-1} \to S^m \Omega^1_{\IP^1}\otimes \cE =\bigoplus_{i=0}^e S^m\Omega^1_{\IP^1}\otimes \cO_{\IP^1}(d_i). \]
Since $a_{\IP^1}^m(\cE)=\bigoplus_{i=0}^e a_{\IP^1}^m$, we can view $\Phi_m(\cE)$ as giving $e+1$ linear systems of degree $m$ in each $PT_Y(y)$, for $y\in Y=\IP^1$. So it means that the linear system $\langle v^m,v^{m-1}w_1,\dots, v^{m-1}w_e\rangle$ corresponds to $\langle v^m\rangle$ in each of $e+1$ copies of $PT_Y(y)$.
\end{example} 
\medskip

\bigskip

\noindent {\bf References}
\medskip

\begin{biblist}

\bib{AK1}{book}{
   author={Altman, Allen},
   author={Kleiman, Steven},
   title={Introduction to Grothendieck duality theory},
   series={Lecture Notes in Mathematics, Vol. 146},
   publisher={Springer-Verlag, Berlin-New York},
   date={1970},
   pages={ii+185},
}
    
\bib{AK2}{article}{
   author={Altman, Allen B.},
   author={Kleiman, Steven L.},
   title={Foundations of the theory of Fano schemes},
   journal={Compositio Math.},
   volume={34},
   date={1977},
   number={1},
   pages={3--47},
   issn={0010-437X},
}

\bib{Bo}{article}{
   author={Bott, Raoul},
   title={Homogeneous vector bundles},
   journal={Ann. of Math. (2)},
   volume={66},
   date={1957},
   pages={203--248},
   issn={0003-486X},
}

\bib{DeDiI}{article}{
   author={De Poi, Pietro},
   author={Di Gennaro, Roberta},
   author={Ilardi, Giovanna},
   title={On varieties with higher osculating defect},
   journal={Rev. Mat. Iberoam.},
   volume={29},
   date={2013},
   number={4},
   pages={1191--1210},
   issn={0213-2230},
}

\bib{DeI}{article}{
   author={De Poi, Pietro},
   author={Ilardi, Giovanna},
   title={On higher Gauss maps},
   journal={J. Pure Appl. Algebra},
   volume={219},
   date={2015},
   number={11},
   pages={5137--5148},
   issn={0022-4049},
}

\bib{Dye}{article}{
   author={Dye, R. H.},
   title={The extraordinary higher tangent spaces of certain quadric
   intersections},
   journal={Proc. Edinburgh Math. Soc. (2)},
   volume={35},
   date={1992},
   number={3},
   pages={437--447},
   issn={0013-0915},
}

\bib{EN}{article}{
   author={Ein, Lawrence},
   author={Niu, Wenbo},
   title={On vanishing of fundamental forms of algebraic varieties},
   journal={arXiv:2304.08430},
   date={2023},
}

\bib{F}{book}{
   author={Fulton, William},
   title={Intersection theory},
   series={Ergebnisse der Mathematik und ihrer Grenzgebiete (3) [Results in
   Mathematics and Related Areas (3)]},
   volume={2},
   publisher={Springer-Verlag, Berlin},
   date={1984},
   pages={xi+470},
   isbn={3-540-12176-5},
}
    
\bib{FH}{book}{
   author={Fulton, William},
   author={Harris, Joe},
   title={Representation theory},
   series={Graduate Texts in Mathematics},
   volume={129},
   note={A first course;
   Readings in Mathematics},
   publisher={Springer-Verlag, New York},
   date={1991},
   pages={xvi+551},
   isbn={0-387-97527-6},
   isbn={0-387-97495-4},
}

\bib{GH}{article}{
   author={Griffiths, Phillip},
   author={Harris, Joseph},
   title={Algebraic geometry and local differential geometry},
   journal={Ann. Sci. \'{E}cole Norm. Sup. (4)},
   volume={12},
   date={1979},
   number={3},
   pages={355--452},
   issn={0012-9593},
}

\bib{EGA}{article}{
   author={Grothendieck, A.},
   title={\'{E}l\'{e}ments de g\'{e}om\'{e}trie alg\'{e}brique. IV. \'{E}tude locale des sch\'{e}mas et
   des morphismes de sch\'{e}mas IV},
   language={French},
   journal={Inst. Hautes \'{E}tudes Sci. Publ. Math.},
   number={32},
   date={1967},
   pages={361},
   issn={0073-8301},
}	
    
   \bib{H}{book}{
   author={Hartshorne, Robin},
   title={Algebraic geometry},
   series={Graduate Texts in Mathematics, No. 52},
   publisher={Springer-Verlag, New York-Heidelberg},
   date={1977},
   pages={xvi+496},
   isbn={0-387-90244-9},
}

\bib{IveyL}{book}{
   author={Ivey, Thomas A.},
   author={Landsberg, Joseph M.},
   title={Cartan for beginners},
   series={Graduate Studies in Mathematics},
   volume={175},
   edition={2},
   note={Differential geometry via moving frames and exterior differential
   systems},
   publisher={American Mathematical Society, Providence, RI},
   date={2016},
   pages={xviii + 453},
   isbn={978-1-4704-0986-9},
}

\bib{L}{article}{
   author={Landsberg, J. M.},
   title={On second fundamental forms of projective varieties},
   journal={Invent. Math.},
   volume={117},
   date={1994},
   number={2},
   pages={303--315},
   issn={0020-9910},
}

\bib{Lbook}{book}{
   author={Landsberg, J. M.},
   title={Algebraic geometry and projective differential geometry},
   series={Lecture Notes Series},
   volume={45},
   publisher={Seoul National University, Research Institute of Mathematics,
   Global Analysis Research Center, Seoul},
   date={1999},
   pages={ii+85},
}

\bib{LandsbergLinear}{article}{
   author={Landsberg, J. M.},
   title={Is a linear space contained in a submanifold? On the number of
   derivatives needed to tell},
   journal={J. Reine Angew. Math.},
   volume={508},
   date={1999},
   pages={53--60},
   issn={0075-4102},
}

\bib{M}{article}{
   author={Miyaoka, Yoichi},
   title={Deformations of a morphism along a foliation and applications},
   conference={
      title={Algebraic geometry, Bowdoin, 1985},
      address={Brunswick, Maine},
      date={1985},
   },
   book={
      series={Proc. Sympos. Pure Math.},
      volume={46},
      publisher={Amer. Math. Soc., Providence, RI},
   },
   date={1987},
   pages={245--268},
}

\bib{P}{article}{
   author={Perkinson, David},
   title={Curves in Grassmannians},
   journal={Trans. Amer. Math. Soc.},
   volume={347},
   date={1995},
   number={9},
   pages={3179--3246},
   issn={0002-9947},
}

\bib{numchar}{article}{
   author={Piene, Ragni},
   title={Numerical characters of a curve in projective $n$-space},
   conference={ 
      title={Real and complex singularities},
      address={Proc. Ninth Nordic Summer School/NAVF Sympos. Math., Oslo},
      date={1976},
   },
   book={
      publisher={Sijthoff and Noordhoff, Alphen aan den Rijn},
   },
   date={1977},
   pages={475--495},
}

\bib{P83}{article}{
   author={Piene, Ragni},
   title={A note on higher order dual varieties, with an application to
   scrolls},
   conference={
      title={Singularities, Part 2},
      address={Arcata, Calif.},
      date={1981},
   },
   book={
      series={Proc. Sympos. Pure Math.},
      volume={40},
      publisher={Amer. Math. Soc., Providence, RI},
   },
   date={1983},
   pages={335--342},
}

\bib{PS}{article}{
   author={Piene, Ragni},
   author={Sacchiero, Gianni},
   title={Duality for rational normal scrolls},
   journal={Comm. Algebra},
   volume={12},
   date={1984},
   number={9-10},
   pages={1041--1066},
   issn={0092-7872},
}

\bib{S}{article}{
   author={Shifrin, Theodore},
   title={The osculatory behavior of surfaces in ${\bf P}^5$},
   journal={Pacific J. Math.},
   volume={123},
   date={1986},
   number={1},
   pages={227--256},
   issn={0030-8730},
}
   
\bib{Te}{article}{
    author={Tegnander, Cathrine},
    title={Enumerativ geometri for projektive singul\ae re flater},
    journal={Cand. scient. thesis, University of Oslo},
    date={1992},
 }   
 
 \bib{T}{article}{
   author={di Togliatti, Eugenio},
   title={Alcuni esemp\^i{} di superficie algebriche degli iperspaz\^i{} che 
  rappresentano un' equazione di Laplace},
   journal={Comm. Math. Helv.},
   volume={1},
   date={1929},
   pages={255--272},
}

\end{biblist}

\end{document}